\numberwithin{equation}{section}
\newtheorem{theorem}{Theorem}[section]
\newtheorem{corollary}[theorem]{Corollary}
\newtheorem{lemma}[theorem]{Lemma}
\newtheorem{proposition}[theorem]{Proposition}
\newtheorem{remark}[theorem]{Remark}
\newtheorem*{namedtheorem}{\theoremname}
  \newcommand{\theoremname}{testing}
  \newenvironment{named}[1]{
     \renewcommand{\theoremname}{#1}
     \begin{namedtheorem}}
     {\end{namedtheorem}}
\begin{document}
 
\title[Radial nodal solutions of Hénon type equations]{Morse index of radial nodal solutions of Hénon type equations in dimension two}

\thanks{E. Moreira dos Santos is partially supported by CNPq \#309291/2012-7 grant and FAPESP \#2014/03805-2 grant. F. Pacella is partially supported by PRIN 2009-WRJ3W7 grant and GNAMPA-INDAM}

\author{Ederson Moreira dos Santos}
\address{Ederson Moreira dos Santos \newline \indent Instituto de Ciências Matemáticas e de Computação --- Universidade de São Paulo \newline \indent
Caixa Postal 668, CEP 13560-970 - S\~ao Carlos - SP - Brazil}
\email{ederson@icmc.usp.br}

\author{Filomena Pacella}
\address{Filomena Pacella \newline \indent Dipartimento di Matematica --- Università di Roma {\it{Sapienza}} \newline \indent
P.le. Aldo Moro 2, 00184 Rome, Italy}
\email{pacella@mat.uniroma1.it}

\date{\today}
\subjclass[2010]{35B06; 35B07;  35J15; 35J61}
\keywords{Semilinear elliptic equations; Hénon equation; Nodal solutions; Morse index; Non-degeneracy}

\begin{abstract}
We consider non-autonomous semilinear elliptic equations of the type
\[
-\Delta u = |x|^{\alpha} f(u), \ \ x \in \Omega, \ \ u=0 \quad \text{on} \ \ \partial \Omega,
\]
where $\Omega \subset {\mathbb R}^2$ is either a ball or an annulus centered at the origin, $\alpha >0$ and $f: {\mathbb R}\ \rightarrow {\mathbb R}$ is $C^{1, \beta}$ on bounded sets of ${\mathbb R}$. We address the question of estimating the Morse index $m(u)$ of a sign changing radial solution $u$. We prove that $m(u) \geq 3$ for every $\alpha>0$ and that $m(u)\geq \alpha+ 3$ if $\alpha$ is even. If $f$ is superlinear the previous estimates become $m(u) \geq n(u)+2$ and $m(u) \geq \alpha+ n(u)+2$, respectively, where $n(u)$ denotes the number of nodal sets of $u$, i.e. of connected components of $\{ x\in \Omega; u(x) \neq 0\}$. Consequently, every least energy nodal solution $u_{\alpha}$ is not radially symmetric and $m(u_{\alpha}) \rightarrow + \infty$ as $\alpha \rightarrow + \infty$ along the sequence of even exponents $\alpha$.
\end{abstract}
\maketitle

\section{Introduction}
Let us consider a non-autonomous semilinear elliptic equation of the type
\begin{equation}\label{generalf}
-\Delta u  = g(|x|, u) \ \ \text{in} \ \ \Omega, \quad u = 0 \ \ \text{on} \ \ \partial \Omega,
\end{equation}
where $\Omega \subset {\mathbb R}^N$, $N \geq 2$, is either a ball or an annulus centered at the origin, $g: [0, +\infty) \times {\mathbb R} \rightarrow {\mathbb R}$ is such that $r \mapsto g(r,u)$ is $C^{0, \beta}$ on bounded sets of $[0,+\infty) \times {\mathbb R}$, $u \mapsto g_u(r,u)$ is  $C^{0, \gamma}$ on bounded sets of $[0, + \infty) \times {\mathbb R}$, where $g_u$ denotes the derivative of $g$ with respect to the variable $u$. Since the problem is invariant by spherical symmetry we can consider classical radial solutions of \eqref{generalf}. Here we address the question of estimating the Morse index of sign changing radial solutions of \eqref{generalf}.

Given any continuous function $u:\Omega \rightarrow {\mathbb R}$ we will denote by $n(u)$ the number of nodal sets of $u$, i.e. of connected components of $\{ x\in \Omega; u(x) \neq 0\}$.

We recall that the Morse index $m(u)$ of a solution $u$ of \eqref{generalf} is the maximal dimension of a subspace of $H^1_0(\Omega)$ in which the quadratic form
\begin{equation}\label{Qint}
w \longmapsto Q_u(w,w) = \int_{\Omega} |\nabla w (x)|^2 dx  - \int_{\Omega} g_u(|x|, u(x)) w^2(x) dx
\end{equation}
is negative definite. Alternatively, since we are considering the case of bounded domains, $m(u)$ can be defined as the number of negative eigenvalues, counted with their multiplicity, of the linearized operator $L_u : = -\Delta - g_u(|x|, u)$ in the space $H^1_0(\Omega)$.

In the case of autonomous problems, i.e. when the nonlinear term $g$ does not depend on the space variable, Aftalion and Pacella \cite{AftalionPacella}, as a consequence of a more general result in symmetric domains, obtained the following theorem.

\begin{named}{Theorem A}[Autonomous problems]\label{ThA}
Let $g(r,u) = f(u)$ with $f \in C^1({\mathbb R})$. Then any sign changing radial solution of \eqref{generalf} has Morse index greater than or equal to $N+1$.
\end{named}

\begin{remark}\label{remark4int}
More precisely in \cite{AftalionPacella} it is proved that the linearized operator $L_u$ has at least $N$ negative eigenvalues whose corresponding eigenfunctions are non-radial and change sign. Therefore, adding the first eigenvalue, which is obviously associated to a radial eigenfunction, one gets at least $N+1$ negative eigenvalues. In the case when $f$ is superlinear, i.e. satisfies \eqref{H:superlinear}, then it is easy to see, testing the quadratic form  on the solution $u$ in each nodal region, that there are at least $n(u)$ negative eigenvalues in the space of radial functions. Hence for these nonlinearities, any sign changing radial solution has Morse index greater than or equal to $N+n(u)$. In particular this holds for Lane-Emden problems, i.e.
\begin{equation}\label{p>2}
-\Delta u  = |u|^{p-1}u \ \ \text{in} \ \ \Omega, \quad u = 0 \ \ \text{on} \ \ \partial \Omega, \ \ p>1.
\end{equation}
We also point out that the assumption $f(0) \geq 0$ in \cite{AftalionPacella} is not really needed.
\medbreak
\end{remark}

As a consequence of Theorem A and in the case of superlinear, subcritical problems, like \eqref{p>2} for $p <\frac{N+2}{N-2}$ if $N \geq 3$, in \cite{AftalionPacella} it is deduced that any least energy nodal solution cannot be radial, since their Morse index is precisely $2$; cf. \cite{CastroCossioNeuberger, BartschChangWang, BartschWeth}. Obviously this break of symmetry is relevant for many applications.

The proof of Theorem A uses in a crucial way the fact that the derivatives $\frac{\partial u}{\partial x_i}$, $i = 1, \ldots, N$, of a solution $u$ of \eqref{generalf} are indeed solutions of the linearized equation $L_u (w) = 0$. This property is a peculiarity of autonomous problems. For this reason the proof of \cite{AftalionPacella} does not extend to the case of non-autonomous nonlinearities. So it is an open question to understand whether a similar estimate on the Morse index of nodal radial solutions holds  for the general problem \eqref{generalf} and also whether least energy nodal solutions are radial or not.  

In this paper we answer these questions in the case of nonlinearities of the type $g(|x|, u) = |x|^{\alpha} f(u)$ and $N=2$. More precisely we consider the problem
\begin{equation}\label{weight+f}
-\Delta u  = |x|^{\alpha} f(u) \ \ \text{in} \ \ \Omega, \quad u = 0 \ \ \text{on} \ \ \partial \Omega,
\end{equation}
where $\alpha> 0$, $\Omega \subset {\mathbb R}^2$ is either a ball or an annulus centered at the origin and $f: {\mathbb R} \rightarrow {\mathbb R}$ is $C^{1, \beta}$ on bounded sets of ${\mathbb R}$. In some of our results we also assume the following superlinear condition
\begin{equation}\label{H:superlinear}
f'(u) > \frac{f(u)}{u}\quad \forall \, u \in {\mathbb R}\backslash\{0\}.
\end{equation}

Our first result is the following.

\begin{theorem}\label{maintheoremN=2}
Let $u$ be a radial sign changing solution of \eqref{weight+f}. Then $u$ has Morse index greater than or equal to $3$. Moreover, if \eqref{H:superlinear} holds, then the Morse index of $u$ is at least $n(u)+2$. 
\end{theorem}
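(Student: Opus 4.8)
The plan is to study the spectrum of $L_u$ through its decomposition into angular Fourier modes. Since $g_u(|x|,u)=|x|^\alpha f'(u)$ is radial, separating variables $w=\psi(r)e^{ik\theta}$ reduces the eigenvalue problem for $L_u$ in $H^1_0(\Omega)$ (with respect to the measure $r\,dr$) to the family of Sturm--Liouville operators
\[
\mathcal L_\nu\psi=-\psi''-\tfrac1r\psi'+\tfrac{\nu^2}{r^2}\psi-r^\alpha f'(u)\psi ,
\]
on the radial interval $J$ ($=(0,R)$ for the ball, $=(a,b)$ for the annulus), taken with the natural boundary conditions, for the integer parameters $\nu=0,1,2,\dots$; the mode $\nu=0$ contributes its negative eigenvalues to $m(u)$ once, and each mode $\nu\ge1$ contributes twice. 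Thus $m(u)$ equals the number of negative eigenvalues of $\mathcal L_0$ plus twice that of $\mathcal L_1$ plus a non-negative contribution from higher modes. In particular, once one exhibits a radial profile $\Phi\in H^1_0$ with $\int_J(|\Phi'|^2+\tfrac1{r^2}\Phi^2-r^\alpha f'(u)\Phi^2)r\,dr<0$, the three functions $\Phi,\ \Phi\cos\theta,\ \Phi\sin\theta$ are linearly independent, pairwise $Q_u$-orthogonal, and each makes $Q_u$ negative (for $\Phi$ alone one drops the non-negative term $\tfrac1{r^2}\Phi^2$), giving $m(u)\ge3$. So everything reduces to finding one negative eigenvalue of $\mathcal L_1$.

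The key point — replacing the autonomous fact that $\partial u/\partial x_i$ solves $L_uw=0$, which fails for \eqref{weight+f} — is that, setting $\beta:=\tfrac\alpha2+1$, a direct computation (equivalently, the substitution $s=r^{(\alpha+2)/2}$, which turns the radial form of \eqref{weight+f} into the weightless radial equation $-\Delta v=\tfrac4{(\alpha+2)^2}f(v)$) shows that
\[
\phi(r):=r^{-\alpha/2}u'(r)\quad\text{satisfies}\quad\mathcal L_\beta\phi=0\ \text{ on }J ,
\]
where $\mathcal L_\beta$ is the operator above with $\nu=\beta$. Note $\phi$ decays like $r^\beta$ as $r\to0$, so it lies in the form domain, and $\mathcal L_1\le\mathcal L_\beta$ pointwise because $1\le\beta$. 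Since $u$ changes sign, $u'$ has interior zeros: in the ball case $u'$ cannot keep a fixed sign on $(0,R)$ (else $u$ would be monotone, hence of one sign), so it has a first zero $\rho\in(0,R)$ and $\phi$ is of one sign on $(0,\rho)$ with $\phi(0^+)=\phi(\rho)=0$; in the annulus case $u$ has at least two nodal regions and an interior critical point in each, so $u'$ has two consecutive zeros $\rho_1<\rho_2$ in $(a,b)$ and $\phi$ is of one sign on $(\rho_1,\rho_2)$, vanishing at both ends. In either case, on the subinterval $I\Subset J$ bounded by those two zeros, $\phi|_I$ is a one-signed element of the form domain of $\mathcal L_\beta$ on $I$ solving $\mathcal L_\beta\phi=0$, hence the ground state of $\mathcal L_\beta$ on $I$, so its first eigenvalue there equals $0$. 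Extending $\phi|_I$ by zero to $J$ (admissible since $\phi$ vanishes on $\partial I$ and $\overline I$ misses the outer boundary) and using domain monotonicity, the first eigenvalue $\mu_1$ of $\mathcal L_\beta$ on $J$ satisfies $\mu_1\le0$.

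Now I use $\beta>1$. Let $\Phi>0$ be the first eigenfunction of $\mathcal L_\beta$ on $J$; it still decays like $r^\beta$ at $0$, so it is an admissible radial profile, and
\[
\int_J\Big(|\Phi'|^2+\tfrac1{r^2}\Phi^2-r^\alpha f'(u)\Phi^2\Big)r\,dr=\mu_1\!\int_J\!\Phi^2r\,dr-(\beta^2-1)\!\int_J\!\tfrac{\Phi^2}{r}\,dr<0 ,
\]
so $\mathcal L_1$ has a negative eigenvalue, whence $m(u)\ge3$ by the first paragraph. If \eqref{H:superlinear} holds, then $f'(t)t^2>f(t)t$ for all $t\ne0$, so testing $Q_u$ on $u$ restricted to each of its $n(u)$ nodal regions yields $n(u)$ radial functions with pairwise disjoint supports on which $Q_u$ is negative; these are $Q_u$-orthogonal to $\Phi\cos\theta$ and $\Phi\sin\theta$ by integration in $\theta$, so together they span an $(n(u)+2)$-dimensional subspace on which $Q_u$ is negative definite, i.e. $m(u)\ge n(u)+2$. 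The main obstacle — and the only place the precise form of the weight is used, and where $N=2$ enters — is the displayed identity $\mathcal L_\beta(r^{-\alpha/2}u')=0$; I expect the routine but delicate part to be the bookkeeping at the singular endpoint $r=0$ (membership in the relevant form domains, the limit-point character of $\mathcal L_\nu$, and the identification of a one-signed solution with the ground state), all of which can alternatively be imported from the well-studied weightless problem through the change of variables $s=r^{(\alpha+2)/2}$.
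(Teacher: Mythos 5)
Your argument is correct, and it takes a genuinely different route from the paper's, so let me compare. The paper transplants the whole problem to the autonomous one via the planar map $T_\kappa y=y|y|^{\kappa-1}$ with $\kappa=\frac{2}{\alpha+2}$, quotes the Aftalion--Pacella result for three negative directions of the autonomous linearization, and pushes them back to $\Omega$ using the one-sided comparison of quadratic forms \eqref{generalenergy} of Proposition \ref{comparisonlemma}. You stay on the weighted side, decompose $L_u$ into angular Fourier modes, and manufacture the mode-$1$ negative eigenvalue from the identity $\mathcal L_\beta\bigl(r^{-\alpha/2}u'\bigr)=0$ with $\beta=\frac{\alpha}{2}+1$, combined with the monotonicity $\mathcal L_1\le\mathcal L_\beta$. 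Your key identity is right: differentiating the radial equation and eliminating $f(u)$ gives, for $w=u'$,
\[
-w''+\frac{\alpha-1}{r}\,w'+\frac{1+\alpha}{r^2}\,w=r^{\alpha}f'(u)\,w,
\]
and the substitution $w=r^{\alpha/2}\phi$ turns this into $\mathcal L_\beta\phi=0$; this is precisely the pull-back under $s=r^{(\alpha+2)/2}$ of the fact that $v_s$ lies in the kernel of the mode-$1$ operator for the autonomous problem, and the comparison $\mathcal L_1\le\mathcal L_\beta$ is the mode-$1$ shadow of \eqref{generalenergy}. What your version buys: it is self-contained (no appeal to \cite{AftalionPacella}), it dispenses with the two-dimensional transplantation lemmas, and the explicit defect $(\beta^2-1)\int\Phi^2 r^{-1}\,dr>0$ quantifies the effect of the weight --- pushed a little further (every integer mode $1\le\nu\le\beta$ has $\mu_1(\mathcal L_\nu)<0$, using strict domain monotonicity to upgrade $\mu_1(\mathcal L_\beta)\le 0$ to a strict inequality), it essentially also yields Theorem \ref{theoremalpha=2} for even $\alpha$. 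What the paper's version buys is a single change-of-variables machine reused for Theorems \ref{theoremalpha=2} and \ref{theorem:radialsolutions}. The only points to tidy in a final write-up are the ones you already flag: form-domain membership at $r=0$ of $r^{-\alpha/2}u'$ and of the ground state $\Phi$ (both behave like $r^{\beta}$ there, so the boundary terms in the integration by parts identifying $\mu_1(\mathcal L_\beta)\le 0$ do vanish), and the existence of two consecutive interior zeros of $u'$ in the annulus case, which indeed follows from $u$ having an interior critical point in each of its at least two nodal annuli.
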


In the case that $f(u) = |u|^{p-1}u$, with $p>1$,  \eqref{weight+f} turns out to be the so called Hénon equation \cite{Henon}
\begin{equation}\label{henon}
-\Delta u = |x|^{\alpha}|u|^{p-1}u \quad x \in \Omega, \qquad u=0 \quad \text{on} \quad \partial \Omega, 
\end{equation}
which has been extensively studied since the work of Ni \cite{Ni}. We mention that apart from its mathematical interest, the Hénon equation is important in the \linebreak applications, in particular in astrophysics; cf. \cite{Henon, EdersonPacella}. Ni considered \eqref{henon} in the case of $\Omega$ being an open ball centered at zero in ${\mathbb R}^N$ with $N\geq 3$. In this case the Poho{\v{z}}aev identity, as in \cite[Lemma 1.1]{deFigueiredoLionsNussbaum}, shows that \eqref{henon} has no nontrivial solution if $p \geq \frac{N+2 + 2\alpha}{N-2}$. On the other side, with $1 < p < \frac{N+ 2 + 2\alpha}{N-2}$, the existence of a positive radial solution can be proved by using classical variational methods, for example, combining the Radial Lemma in \cite{Ni} with the mountain pass theorem.  Again in the same range of $p$, a combination of the Radial Lemma in \cite{Ni} with some arguments in \cite{BartschWeth} gives the existence of a least energy solution among the nodal radial solutions of \eqref{henon}, hereafter called least energy nodal radial solution. In addition, in the case when $\Omega$ is an annulus, these existence results hold trivially for any $p>1$, since no lack of compactness occurs in the setting of radial functions. 

Next we recall that it is proved in \cite[Theorem 1.3]{BartschWeth}, see also \cite{CastroCossioNeuberger}, that a least energy nodal solution of \eqref{weight+f} exists and has Morse index $2$ if $f$ satisfies \eqref{H:superlinear} and the additional conditions:
\begin{equation}\label{growth}
f(0) = 0 \quad \text{and} \quad \exists \, p>1 \ \ s.t. \ \ |f'(u)| \leq C (1 + |u|^{p-1}) \ \ \forall \, u \in {\mathbb R},
\end{equation}
\begin{equation}\label{A-R}
\exists \, R> 0, \ \ \theta >2 \ \ s.t. \ \ 0 < \theta \int_0^u f(\tau) d\tau \leq u f(u) \ \ \forall \, |u| \geq R.
\end{equation}
Then, as a consequence of Theorem \ref{maintheoremN=2}, we get the following result.

\begin{corollary}\label{cor:symmetrybreaking}
Assume \eqref{H:superlinear}, \eqref{growth} and \eqref{A-R}. Then any least energy nodal solution of \eqref{weight+f} is not radially symmetric.
\end{corollary}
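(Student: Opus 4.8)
The plan is to argue by contradiction, combining the Morse index lower bound of Theorem \ref{maintheoremN=2} with the known value of the Morse index of least energy nodal solutions recalled just above the statement. The corollary will follow in a couple of lines once these two ingredients are lined up.

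First I would invoke \cite[Theorem 1.3]{BartschWeth} (see also \cite{CastroCossioNeuberger}): under \eqref{H:superlinear}, \eqref{growth}, \eqref{A-R}, together with the standing regularity hypothesis that $f$ is $C^{1,\beta}$ on bounded sets, a least energy nodal solution $u$ of \eqref{weight+f} exists and satisfies $m(u) = 2$. Before applying it one should check that the weighted nonlinearity $g(|x|,u) = |x|^{\alpha} f(u)$ meets the hypotheses under which that theorem is stated. This is immediate: since $\Omega$ is bounded, $|x|^{\alpha}$ is a bounded, continuous weight (smooth away from the origin, and continuous at the origin because $\alpha>0$), so the growth bound in \eqref{growth} and the Ambrosetti--Rabinowitz condition \eqref{A-R} transfer from $f$ to $g$, the associated energy functional is of class $C^1$ on $H^1_0(\Omega)$, and the Palais--Smale compactness needed for the Nehari-type minimization over the nodal set is preserved.

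Next, suppose for contradiction that some least energy nodal solution $u$ of \eqref{weight+f} is radially symmetric. Being nodal, $u$ is then a radial sign changing solution, so Theorem \ref{maintheoremN=2} applies and gives $m(u) \geq 3$; in fact, since \eqref{H:superlinear} is in force and a sign changing continuous function has $n(u) \geq 2$ nodal sets, it even yields $m(u) \geq n(u) + 2 \geq 4$. Either way this contradicts $m(u) = 2$. Hence no least energy nodal solution of \eqref{weight+f} can be radial, which is exactly the assertion of the corollary.

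I do not anticipate any genuine obstacle here, since the statement is a direct consequence of Theorem \ref{maintheoremN=2} and the fact that least energy nodal solutions have Morse index $2$. The only points deserving a line of care are the two verifications mentioned above: that the Nehari-type variational construction of \cite{BartschWeth} and its Morse index conclusion apply verbatim to \eqref{weight+f} with the weight $|x|^{\alpha}$, and that \eqref{growth}--\eqref{A-R} ensure the compactness used in the existence part; both are routine given the boundedness of $\Omega$.
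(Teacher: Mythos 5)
Your argument is correct and is exactly the one intended by the paper: the least energy nodal solution has Morse index $2$ by \cite[Theorem 1.3]{BartschWeth}, while Theorem \ref{maintheoremN=2} forces any radial sign changing solution to have Morse index at least $3$, a contradiction. The extra verifications you mention (that the bounded weight $|x|^{\alpha}$ preserves the hypotheses of the Nehari-type construction) are routine and consistent with how the paper treats this point.
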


In contrast to the above symmetry breaking result, we recall that it is proved in \cite{PacellaWeth, BartschWethWillem} that every least energy nodal solution of \eqref{weight+f} is foliated Schwarz symmetric, i.e. axially symmetric and monotone in the angular coordinate. We also point out that Corollary \ref{cor:symmetrybreaking} was already shown for the Hénon equation \eqref{henon}, for every $N\geq2$, but only for particular cases of $\alpha$: for $\alpha$ large in \cite[Remark 6.4]{BartschWethWillem} by a comparison of energy argument; for $\alpha$ small  in \cite[Corollary 1.6 (iii)]{BonheuredosSantosRamosTavares} by an asymptotic analysis, as $\alpha \rightarrow 0$, of the least energy nodal solutions. The general symmetry breaking result, for any $\alpha> 0$, was, up to now, an open question.

We point out that the proof of Theorem \ref{maintheoremN=2} is different from that of Theorem A of \cite{AftalionPacella}. Indeed it relies on a suitable change of variable which works well in ${\mathbb R}^2$. This change of variable was considered previously in \cite{ClementdeFigueiredoMitidieri}, see also the recent papers \cite{CowanGhoussoub, GladialiGrossiNeves}, where an alternative approach to identify the critical exponent $\frac{N+2 + 2\alpha}{N-2}$, $N \geq 3$, associated with the Hénon equation \eqref{henon} in the case when $\Omega$ is an open ball centered at zero in ${\mathbb R}^N$, was presented. In these three papers, while studying radial solutions, the authors consider the corresponding ODE problem. Then, the critical exponent $\frac{N+2 + 2\alpha}{N-2}$ comes out as a result of a suitable one dimensional change of variable that reduces the weighted problem to a non-weighted one. 

The novelty in our arguments consists in applying the change of variable to functions in ${\mathbb R}^2$ which are not necessarily radially symmetric, even though it does not act well on the gradient or on the Laplacian as it does for spherically symmetric functions; cf. \eqref{fundamentalgradientspolar}, Remark \ref{changelaplacian}, \eqref{fundamentalgradientsradial} and \eqref{laplacians}. Nevertheless, we show that it is useful to get an estimate from below on the Morse index of radial nodal solutions of \eqref{weight+f} in the whole space $H^1_0(\Omega)$, i.e. not only on radial directions; cf. Proposition  \ref{comparisonlemma}.

Another question which arises from Theorem \ref{maintheoremN=2} is that of having a more precise estimate on the Morse index as the exponent $\alpha$ varies. How does the weight $|x|^{\alpha}$ influence the Morse index of nodal radial solutions of \eqref{weight+f}\,? In this direction, using some different changes of variables, we prove that the Morse indices go to infinity along the sequence of even exponents $\alpha$.

\begin{theorem}\label{theoremalpha=2}
Let $\alpha>0$ be even and let $u$ be a radial nodal solution of \eqref{weight+f}. Then $u$ has Morse index greater than or equal to $\alpha +3$. If in addition \eqref{H:superlinear} holds, then the Morse index of $u$ is at least $ n(u) + \alpha + 2$. 
\end{theorem}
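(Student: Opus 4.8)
The plan is to transform \eqref{weight+f} into a non-weighted problem in the plane, where the mechanism behind Theorem~A is available, and then to follow what the transformation does to each angular Fourier mode; this refines the proof of Theorem~\ref{maintheoremN=2}, in which only the mode $j=1$ is exploited. Since $\alpha$ is even, $k:=\frac{\alpha+2}{2}$ is a positive integer, and the radial change of variable $s=\frac{2}{\alpha+2}\,r^{(\alpha+2)/2}$, i.e. $s=\frac1k\,r^{k}$ (the one going back to \cite{ClementdeFigueiredoMitidieri}), maps $\Omega$ onto a ball or annulus $\widetilde\Omega$, of outer radius $R'$, and turns the radial profile $U$ of $u$ into the radial profile $V$ of a sign changing radial solution $v$ of $-\Delta v=f(v)$ in $\widetilde\Omega$, $v=0$ on $\partial\widetilde\Omega$, with the same $f$ (the weight is absorbed by the Jacobian) and with $n(v)=n(u)$.

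The first step is to compute, one Fourier mode at a time, how $Q_u$ behaves under this change of variable. Writing a test function as $\varphi(r,\theta)=\rho(r)\cos(j\theta)$ (or with $\sin$) and using the polar form of the gradient, one obtains
\[
Q_u(\varphi,\varphi)=\pi k\int_0^{R'}\Bigl[(\sigma')^2+\frac{(j/k)^2}{s^2}\,\sigma^2-f'(v)\,\sigma^2\Bigr]\,s\,ds ,
\]
where $\sigma(s):=\rho(r)$. Hence there is a radial $\rho$ for which $Q_u(\rho\cos(j\theta),\cdot)$ is negative if and only if the one-dimensional operator $\mathcal L_\mu:=-\partial_{ss}-\frac1s\partial_s+\frac{\mu^2}{s^2}-f'(v)$ on $(0,R')$, with Dirichlet condition at the outer endpoint, has negative first eigenvalue $\lambda_1(\mathcal L_\mu)$, where $\mu=j/k$. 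I would then show $\lambda_1(\mathcal L_\mu)<0$ for every $\mu\in(0,1]$ and apply it to the integer modes $j=1,\dots,k$; it is precisely the parity of $\alpha$, i.e. $k\in\mathbb N$, that makes $\{1,\dots,k\}$ a full set of genuine angular modes, and hence yields the clean count $2k=\alpha+2$.

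For $\mu=1$ this is the planar instance of Theorem~A: differentiating the radial equation $-V''-\frac1s V'=f(V)$ gives $\mathcal L_1 V'=0$ — the mechanism of Theorem~A, by which directional derivatives of a solution of an autonomous equation solve the linearized equation. Since $v$ changes sign, $V$ is not monotone, so $V'$ vanishes at two points of $\overline{\widetilde\Omega}$ between which it does not vanish, and testing the quadratic form of $\mathcal L_1$ with $V'$ cut off on that subinterval yields (after an integration by parts exploiting $\mathcal L_1 V'=0$) the value $0$, which cannot be achieved by an eigenfunction, since such a test function vanishes on a subinterval, against uniqueness for the ODE. Hence $\lambda_1(\mathcal L_1)<0$, and since the quadratic forms of the $\mathcal L_\mu$ are monotone increasing in $\mu$ on a common form domain, $\lambda_1(\mathcal L_\mu)\le\lambda_1(\mathcal L_1)<0$ for all $\mu\in(0,1]$. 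Transporting back, for each $j\in\{1,\dots,k\}$ one gets a radial $\rho_j$, behaving like $r^{j}$ at the origin so that $\rho_j\cos(j\theta),\rho_j\sin(j\theta)\in H^1_0(\Omega)$, on which $Q_u$ is negative; these $2k=\alpha+2$ functions are mutually $L^2$-orthogonal and $Q_u$ is diagonal with respect to them, so $Q_u$ is negative definite on their span.

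Finally one adds the radial directions. Taking $\mu=0$ in the computation above shows that $u$ and $v$ have the same radial Morse index; by Theorem~A and Remark~\ref{remark4int} this is at least $1$, and at least $n(v)=n(u)$ when \eqref{H:superlinear} holds — the latter also directly, since $Q_u(u\chi_{A_i},u\chi_{A_i})=-\int_{A_i}|x|^{\alpha}u^{2}\bigl(f'(u)-\tfrac{f(u)}{u}\bigr)<0$ on each nodal annulus $A_i$. These radial functions are $L^2$-orthogonal to the non-radial ones, so $Q_u$ is negative definite on a subspace of dimension $(\alpha+2)+1=\alpha+3$ in general, and $(\alpha+2)+n(u)=n(u)+\alpha+2$ under \eqref{H:superlinear}. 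I expect the delicate point to be the mode-by-mode transformation of $Q_u$: the change of variable is only a $\theta$-preserving radial reparametrization — conformal on radial functions, but not on the others, so it does not commute with the Laplacian there — and one must carefully track the emergence of the effective mode $\mu=j/k$ and of the common positive constant $\pi k$, check the indicial behaviour $\sigma_j\sim s^{j/k}$ near the origin that keeps $\rho_j\cos(j\theta)$ in $H^1_0(\Omega)$, and treat the ball and the annulus separately in the base case $\mu=1$.
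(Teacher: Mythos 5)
Your argument is correct, and it reaches the paper's count ($2k=\alpha+2$ non-radial negative directions plus the radial ones) by a genuinely different, more ``separated-variables'' route. The paper uses the full two-dimensional map $T_{1/m,m}:(s,\sigma)\mapsto(s^{1/m},\sigma/m)$, i.e. $z\mapsto z^{1/m}$, under which the weighted linearized operator is exactly conjugate to the unweighted one; the Aftalion--Pacella eigenfunctions $\psi_2,\psi_3$ then transplant to genuine eigenfunctions of the weighted eigenvalue problem with $2m$ nodal sectors, and the lower modes are produced by domain monotonicity of the first Dirichlet eigenvalue on the sectors $\Omega_{n,i}$, $1\le n\le m$, followed by odd reflection. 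You instead keep the angle fixed, decompose test functions into Fourier modes $\rho(r)\cos(j\theta)$, and observe that the radial substitution $s\sim r^{k}$ turns the mode-$j$ quadratic form of $Q_u$ into the mode-$\mu$ form of the unweighted linearization with effective angular momentum $\mu=j/k$ (your displayed formula checks out); the base case $\mu=1$ is the ODE version of the Aftalion--Pacella mechanism ($\mathcal{L}_1V'=0$, truncation of $V'$ between consecutive zeros, unique continuation), and monotonicity of $\lambda_1(\mathcal{L}_\mu)$ in $\mu$ replaces the paper's sector inclusion. Your version buys automatic orthogonality of the $2k$ directions (distinct Fourier modes) and does not need the transplanted functions to be eigenfunctions of the weighted problem, at the price of verifying the mode-by-mode transformation of $Q_u$ and the $H^1_0$-membership near the origin ($\sigma_j\sim s^{j/k}$, hence $\rho_j\cos(j\theta)\sim c\,\mathrm{Re}(z^j)$), both of which you correctly flag and which do go through. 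Two small remarks: the integrality of $k$ enters exactly as you say, since it guarantees $\mu=j/k\le1$ for the honest angular modes $j=1,\dots,k$, while for $\mu>1$ the monotonicity runs the wrong way; and the negativity of the first radial eigenvalue, which you attribute to Theorem A, also follows directly from your own chain $\lambda_1(\mathcal{L}_0)\le\lambda_1(\mathcal{L}_1)<0$.
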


The proof of Theorem \ref{theoremalpha=2} relies on a modification of the previous change of variable that works fine for the case when $\alpha$ is even. This change of variable is the key argument to prove the existence of many negative eigenvalues of the linearized operator $L_u$, associated to a radial sign changing solution $u$ of \eqref{weight+f}, and related to the weighted problem
\begin{equation}\label{weightedeigenvalue}
-\Delta \varphi - |x|^{\alpha} f'(u) \varphi  = \lambda |x|^{\alpha} \varphi \ \ \text{in} \ \ \Omega, \ \ \varphi = 0 \ \ \text{on} \ \ \partial \Omega.
\end{equation}
Indeed its peculiarity is to transform eigenfunctions of the non-weighted problem \eqref{4.3''} with a certain symmetry into eigenfunctions of \eqref{weightedeigenvalue} with a different symmetry. A variant of this was used in \cite{PacellaSrikanth} in higher dimensions to pass from doubly symmetric solutions of a supercritical problem in dimension $2m$, $m \geq 2$, to axially symmetric solutions of a subcritical problem in dimension $m+1$. Here we do not change dimension but we apply a somehow similar idea to create a correspondence between eigenfunctions of linearized operators of two different problems. We believe that the simple ideas exploited in this paper could be useful in other kind of problems.

Next we consider the particular case of the Hénon equation \eqref{henon} and we prove the following non-degeneracy result.
\begin{theorem}\label{theorem:radialsolutions}
Let $\alpha \geq 0$ and $p>1$.
\begin{enumerate}[i)]
\item For each $n \in {\mathbb N}$ there is only one radial solutions $u_{\alpha,n}$ of \eqref{henon}, up to multiplication by $-1$, with $n$ nodal sets. Moreover,
\[
u_{\alpha,n} (x) = \left(\frac{\alpha+2}{2}\right)^{\frac{2}{p-1}} U_{\alpha,n}(|x|^{\frac{\alpha}{2}}x)
\]
where $U_{\alpha,n}$ is the unique, up to multiplication by $-1$, nodal radial solution of \eqref{p>2} in $\Omega_{\alpha}= \{ |x|^{\frac{\alpha}{2}}x; \, x \in \Omega \}$ with $n$ nodal regions.
\vspace{5pt}

\item Let $u_{\alpha}$ be a least energy nodal radial solution of \eqref{henon}. Then $u_{\alpha}$ has two nodal regions, and so $u_{\alpha} = u_{\alpha,2}$ or $u_{\alpha,p} = -u_{\alpha,2}$. Moreover, it is non-degenerate in the space of radial functions, that is, if $\varphi$ is a radial solution of
\[
-\Delta \varphi = p |x|^{\alpha}|u_{\alpha}|^{p-1} \varphi \ \ \text{in} \ \ \Omega, \quad \varphi = 0 \ \ \text{on} \ \ \partial \Omega,
\]
then $\varphi \equiv 0$.
\end{enumerate}
\medbreak
\end{theorem}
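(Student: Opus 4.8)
The whole of Theorem~\ref{theorem:radialsolutions} is driven by the radial Emden--Fowler change of variables alluded to above. For a radial function set $y=|x|^{\frac{\alpha}{2}}x$, equivalently $t=|x|^{\frac{\alpha+2}{2}}=|y|$ on the radial profile; a direct computation with $\Delta u=\frac1r(ru')'$ gives, for $u(x)=w(|y|)$, that $\Delta_x u=\big(\tfrac{\alpha+2}{2}\big)^{2}|x|^{\alpha}\,\Delta_y w$, so the factor $\big(\tfrac{\alpha+2}{2}\big)^{2}|x|^{\alpha}$ exactly absorbs the weight. The first step is to deduce from this that $u$ is a classical radial solution of \eqref{henon} on $\Omega$ if and only if the function $U$ determined by $u(x)=c_\alpha\,U(|x|^{\frac{\alpha}{2}}x)$, with $c_\alpha:=\big(\tfrac{\alpha+2}{2}\big)^{\frac{2}{p-1}}$ (so that $c_\alpha^{\,p-1}=\big(\tfrac{\alpha+2}{2}\big)^{2}$), is a classical radial solution of \eqref{p>2} on $\Omega_\alpha$, which is again a ball, resp.\ an annulus, centered at the origin. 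The one delicate point is that, when $\Omega$ is a ball and $\alpha\notin 2{\mathbb N}$, the map $x\mapsto|x|^{\frac{\alpha}{2}}x$ is only H\"older near the origin, so one must check that classical solutions on the ball are carried to classical solutions on the ball (and not to solutions with a logarithmic singularity at $0$); this follows from the ODE expansions $u(r)=u(0)+O(r^{\alpha+2})$ and $U(y)=U(0)+O(|y|^{2})$. Since $r\mapsto r^{\frac{\alpha+2}{2}}$ is an increasing homeomorphism of the radial segment, the correspondence $u\leftrightarrow U$ preserves the number of nodal sets.

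Part~(i) then follows from the classical fact that, for each $n$, equation \eqref{p>2} has exactly one radial solution with $n$ nodal sets, up to sign, in a ball or in an annulus centered at $0$. On a ball this is a shooting argument: writing $\phi$ for the solution of the radial ODE with $\phi(0)=1$, $\phi'(0)=0$, every radial solution equals $d\,\phi\big(d^{\frac{p-1}{2}}|x|\big)$ for some $d>0$, and requiring the $n$-th zero of $\phi$ to fall on $\partial\Omega_\alpha$ determines $d$ uniquely; on an annulus one invokes the corresponding one-dimensional uniqueness statement. Denoting this solution $U_{\alpha,n}$, the formula for $u_{\alpha,n}$ in the statement is precisely the correspondence above.

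For part~(ii) I would first transport the energy: for radial functions $\int_\Omega|\nabla u|^2=\tfrac{\alpha+2}{2}\int_{\Omega_\alpha}|\nabla w|^2$, and for solutions the energy equals $\tfrac{p-1}{2(p+1)}$ times the Dirichlet integral, so the H\'enon energy of $u_{\alpha,n}$ equals $\tfrac{\alpha+2}{2}c_\alpha^{2}$ — a positive constant independent of $n$ — times the energy of $U_{\alpha,n}$ in $\Omega_\alpha$. Hence $u_\alpha$ corresponds to the least energy nodal radial solution of \eqref{p>2} in $\Omega_\alpha$, which has exactly two nodal regions: on a ball this is the strict monotonicity in $n$ of $\int_{\Omega_\alpha}|\nabla U_{\alpha,n}|^2$, transparent from $U_{\alpha,n}=d_n\phi\big(d_n^{\frac{p-1}{2}}\,\cdot\,\big)$ with $d_n$ increasing in $n$; on an annulus one argues analogously (or invokes the description of least energy nodal solutions in \cite{BartschWeth, CastroCossioNeuberger} within the radial class). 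Thus $u_\alpha=\pm u_{\alpha,2}$. For the non-degeneracy, the same substitution turns a radial solution $\varphi$ of $-\Delta\varphi=p|x|^\alpha|u_{\alpha,2}|^{p-1}\varphi$, $\varphi|_{\partial\Omega}=0$, into a radial solution of $-\Delta\tilde\varphi=p|U_{\alpha,2}|^{p-1}\tilde\varphi$, $\tilde\varphi|_{\partial\Omega_\alpha}=0$, so it suffices to prove that $U:=U_{\alpha,2}$ is non-degenerate among radial functions. Here I would use the Pohozaev--scaling solution $\zeta:=x\cdot\nabla U+\tfrac{2}{p-1}U=rU'+\tfrac{2}{p-1}U$, which solves $L\zeta:=-\Delta\zeta-p|U|^{p-1}\zeta=0$ — a consequence of $\Delta(x\cdot\nabla U)=2\Delta U+x\cdot\nabla\Delta U$ together with $-\Delta U=|U|^{p-1}U$. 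When $\Omega_\alpha$ is a ball, $\zeta$ is regular at the origin, $\zeta=|x|\,U'\neq0$ on $\partial\Omega_\alpha$ (as $U$ has a simple zero there), and $\zeta$ spans the one-dimensional space of radial solutions of $L\psi=0$ that are regular at $0$; hence any such solution vanishing on $\partial\Omega_\alpha$ is identically zero. When $\Omega_\alpha=\{a<|x|<b\}$, $U$ has a single interior zero, so $\zeta$ equals $|x|\,U'$ on both boundary circles and there has the same (say positive) sign, whence $\zeta$ has an even number of zeros in $(a,b)$; if $U$ were degenerate, recall that the radial Morse index of a least energy nodal solution is $2$ (the argument of \cite{BartschWeth, CastroCossioNeuberger} applies in the radial class), so the kernel is spanned by the third radial eigenfunction, which has two interior zeros, i.e.\ four zeros on $[a,b]$; then the Sturm separation theorem applied to the linearly independent solutions $\zeta$ and this eigenfunction of $L\psi=0$ forces $\zeta$ to have exactly three zeros in $(a,b)$, contradicting the parity above.

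The substitution and its algebraic corollaries are routine; the points I expect to require real care are (a) calibrating the change of variables as a genuine bijection between classical solutions across the origin when $\alpha\notin2{\mathbb N}$, and (b) supplying, on the annulus where the scaling shortcut is unavailable, the two inputs used in part~(ii): that the least energy nodal radial solution has exactly two nodal regions and has radial Morse index $2$.
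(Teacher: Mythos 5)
Your proposal is correct, and its skeleton coincides with the paper's: the power change of variables $y=|x|^{\alpha/2}x$ reduces everything to the Lane--Emden equation \eqref{p>2} on $\Omega_\alpha$, part (i) is then the classical one-dimensional uniqueness result (the paper cites \cite{Ni1983, Kajikiya1990} for exactly this), and the non-degeneracy hinges on the auxiliary function $z=x\cdot\nabla U+\tfrac{2}{p-1}U$, which is also the paper's choice. The differences are in the last mile. First, for the fact that $u_\alpha$ has two nodal regions, the paper simply invokes that the radial Morse index of a least energy nodal radial solution is $2$, whereas you go through the proportionality of energies under the change of variables plus monotonicity of $\int|\nabla U_{\alpha,n}|^2$ in $n$; both rest on the same background facts about least energy nodal solutions restricted to the radial class. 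Second, for the non-degeneracy itself the paper, following \cite{PacellaSalazar}, runs a single argument for ball and annulus: multiply \eqref{zeroeigenvalue} by $z$ and \eqref{auxiliaryz} by $w$, integrate by parts to get $\int_{\partial\Omega}[x\cdot\nabla u]\,\partial_\nu w\,dS=0$, and contradict this with Hopf's lemma using the parity of the numbers of nodal regions of $u$ (two) and $w$ (three). You instead split cases: on the ball you observe that $z$ is the unique (up to scalars) radial solution of $Lz=0$ that is regular at the origin and does not vanish on $\partial\Omega_\alpha$, and on the annulus you combine the even parity of the zero set of $z$ with Sturm interlacing against the putative kernel element with four zeros on $[a,b]$. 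These are two classical packagings of the same parity phenomenon; the paper's boundary-integral version is more uniform, while your ODE version is more self-contained (it does not need Hopf's lemma on the annulus, only simplicity of zeros and the Wronskian). Your point (a), that one should check the transformation carries classical solutions through the origin when $\alpha\notin 2{\mathbb N}$, is a legitimate refinement the paper passes over silently, and your resolution via the expansions $u(r)=u(0)+O(r^{\alpha+2})$ is adequate; likewise the exclusion of the logarithmic second solution at the origin is correctly handled by the $H^1$ condition in dimension two.
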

Finally, consider the case when $\Omega$ is the unit ball in ${\mathbb R}^2$ centered at zero. Then $\Omega_{\alpha} = \Omega$ for all $\alpha >0$ and $U_{\alpha,2}$ does not depend on $\alpha$ as well, hence we denote $U_{\alpha, 2}$ simply by $U$. Then the non-degeneracy of $u_{\alpha}$ in $H^1_{0, {\rm{rad}}}(\Omega)$, i.e. ii) of Theorem \ref{theorem:radialsolutions}, together with Theorem  \ref{theoremalpha=2}, i.e. $m(u_{\alpha}) \rightarrow + \infty$ along the sequence of even exponents $\alpha$, indicates that there should be infinitely many branches of non-radial solutions of \eqref{henon} bifurcating  from the curve
\[
C = \left\{ u_{\alpha}: \alpha> 0, \ u_{\alpha} (x) = \left(\frac{\alpha+2}{2}\right)^{\frac{2}{p-1}} U(|x|^{\frac{\alpha}{2}}x)\right\}
\]
of least energy nodal radial solutions of \eqref{henon}. 

This paper is organized as follows. In Section \ref{section:generalchangeofvariables} we introduce a change of variable in ${\mathbb R}^2$, we prove several properties of it and Theorem \ref{theorem:radialsolutions}. Then in Section \ref{section:proofmaintheorem}, based on the results from Section \ref{section:generalchangeofvariables}, we compare the Morse indices of radial nodal solutions of \eqref{weight+f} with those of the corresponding nodal solutions of a non-weighted problem, and we prove Theorem \ref{maintheoremN=2}. Finally, in Section \ref{section:doublysymmetric}, in the case of even $\alpha$, we consider some slightly different changes of variables in ${\mathbb R}^2$ which again relate weighted semilinear elliptic equations like \eqref{weight+f}  to corresponding non-weighted ones. This allows to produce more directions in which the quadratic form $Q_u$ is negative definite proving so Theorem \ref{theoremalpha=2}.

\pagebreak

\section{Preliminary results} \label{section:generalchangeofvariables}

\subsection{A useful change of variable}
Let us fix some notation that will be used throughout in this paper. To a point $x= (x_1, x_2) \in {\mathbb R}^2$ in cartesian coordinates, we will associate the polar coordinates $(r, \theta)$, namely
\begin{equation}\label{identification x}
x_1 = r \cos{\theta}, \ \ x_2 = r \sin{\theta},  \ \ r= \sqrt{|x_1|^2 + |x_2|^2}.
 \end{equation}
So, for every function $u$ defined according to the cartesian coordinates $(x_1, x_2)$, we will write
\begin{equation}\label{identification u}
u(x_1, x_2) = u(r \cos \theta, r \sin \theta) = u(r, \theta). 
\end{equation} 
Then we recall the following formulae
\begin{equation}
\nabla_ x = \left(\frac{\partial}{\partial x_1}, \, \frac{\partial}{\partial x_2}\right) = \left( \cos \theta \frac{\partial}{\partial r} - \frac{1}{r} \sin \theta \frac{\partial}{\partial \theta}, \, \sin \theta \frac{\partial}{\partial r} + \frac{1}{r} \cos \theta \frac{\partial}{\partial \theta}\right),
\end{equation}
\begin{equation}\label{gradientlaplacianpolar}
|\nabla_ x|^2 = \left( \frac{\partial}{\partial x_1} \right)^2 + \left( \frac{\partial}{\partial x_2} \right)^2 = \left(  \frac{\partial}{\partial r}\right)^2 + \frac{1}{r^2} \left( \frac{\partial}{\partial \theta} \right)^2,
\end{equation}
and
\begin{equation}\label{laplacianpolar}
\Delta_x = \frac{\partial^2}{\partial x_1^2} + \frac{\partial^2}{\partial x_2^2} =  \frac{\partial^2}{\partial r^2} + \frac{1}{r} \frac{\partial}{\partial r} + \frac{1}{r^2} \frac{\partial^2}{\partial \theta^2}.
\end{equation} 
 
We will perform some changes of variables $x \longleftrightarrow y$ in ${\mathbb R}^2$. Then to $y = (y_1, y_2) \in {\mathbb R}^2$ we will associate the polar coordinates $(s, \sigma)$ by setting
\begin{equation}\label{identification y}
y_1 = s \cos{\sigma},  \ \ y_2 = r \sin{\sigma},  \ \ s= \sqrt{|y_1|^2 + |y_2|^2}.
 \end{equation}
As before, if the function $v$ is defined according to the cartesian coordinates $(y_1, y_2)$ then we will also write
 \[
 v(y_1, y_2) = v(s \cos \sigma, s \sin \sigma) = v(s, \sigma). 
 \]

Let $\kappa>0$ and consider the following transformation
\begin{equation}\label{tkappa}
T_{\kappa}: {\mathbb R}^2 \rightarrow {\mathbb R}^2,  \quad T_{\kappa} y := y |y|^{\kappa-1},
\end{equation}
where we set $T_{\kappa}(0,0):=(0,0)$ and $x = T_{\kappa}y$. Then, with respect to the polar coordinates $(s, \sigma)$ and $(r, \theta)$, the transformation $T_{\kappa}$ reads
\begin{equation}\label{tkappapolar}
T_{\kappa}: {\mathbb R}^2 \rightarrow {\mathbb R}^2,  \quad T_{\kappa} (s, \sigma) := (s^{\kappa}, \sigma), \ \ \text{i.e.}, \ \ r = s^{\kappa}, \ \ \theta = \sigma.
\end{equation}

The transformation $T_{\kappa}$ has a simpler expression in polar coordinates, which shortens many computations. In view of the applications, we present some of our results, and arguments, also in cartesian coordinates. 

\begin{lemma}\label{lemma:tkappa}
The following properties hold.
\begin{enumerate}[i)]
\item $T_{\kappa}$ is a homeomorphism whose inverse is 
\begin{equation}\label{inversetkappa}
T_{\kappa}^{-1}x = x |x|^{\frac{1}{\kappa}-1}, \ \ \text{i.e.}, \ \ T_{\kappa}^{-1} = T_{\frac{1}{\kappa}}.
\end{equation}
\item In cartesian coordinates, the Jacobian matrix of $T_{\kappa}$ is
\begin{equation}
J_{T_{\kappa}}(y) = \frac{\partial (x_1, x_2)}{\partial (y_1, y_2)} (y) = |y|^{\kappa-3}
\left[
\begin{array}{cc}
|y|^2 + (\kappa -1) y_1^2 & (\kappa -1) y_1y_2\\ \\
(\kappa -1) y_1y_2 & |y|^2 + (\kappa -1) y_2^2
\end{array}
\right], \quad \forall \ y \neq0 
\end{equation}
and
\begin{equation}\label{jacobian}
\left| det\, J_{T_{\kappa}}(y) \right| = \kappa \, |y|^{2\kappa  - 2}, \quad \forall \ y \neq0.
\end{equation}

\item Given a function $\psi$ defined on a subset of ${\mathbb R}^2$, set $\varphi = \psi \circ T_{\kappa}^{-1}$. Let $y \neq 0$ and $x = T_{\kappa}y$. Then $\psi$ is differentiable at $y$ if and only if $\varphi$ is differentiable at $x$. 

\item Let $\psi$, $\varphi$, $y$, $x$ as before and $r$, $s$, $\sigma$ and $\theta$ as in \eqref{tkappapolar}. Then
\begin{equation}\label{fundamentalgradientspolar}
\left[\psi_s^2 + \frac{1}{s^2} \psi_{\sigma}^2\right] s^{2 -2\kappa}= k^2 \varphi_ r^2 + \frac{1}{r^2} \varphi_{\theta}^2, \ \ \forall \, s \neq0,
\end{equation}
which implies that
\begin{equation}\label{fundamentalgradients}
\min\{1,\kappa^2\} | \nabla \varphi (x)|^2 \leq |\nabla \psi (y)|^2 \, |y|^{2 -2\kappa}\leq  \max\{1, \kappa^2\}| \nabla \varphi (x)|^2, \ \ \forall \ y \neq 0.
\end{equation}
Moreover, if $\psi$ is radially symmetric, then
\begin{equation}\label{fundamentalgradientsradial}
\kappa^2 | \nabla \varphi (x)|^2  = |\nabla \psi (y)|^2 \, |y|^{2 -2\kappa}, \ \ \forall \ y \neq 0.
\end{equation}
\end{enumerate}
\end{lemma}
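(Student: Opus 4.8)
The plan is to compute everything in polar coordinates first, where $T_\kappa$ acts simply as $(s,\sigma)\mapsto(s^\kappa,\sigma)$, and then translate back to cartesian coordinates. For (i), I would just verify directly that $T_{1/\kappa}\circ T_\kappa = \mathrm{id}$, either from the cartesian formula $T_\kappa y = y|y|^{\kappa-1}$ (noting $|T_\kappa y| = |y|^\kappa$) or, more transparently, from the polar form $r = s^\kappa$; continuity of both $T_\kappa$ and its inverse is immediate away from the origin and at the origin follows from $|T_\kappa y| = |y|^\kappa \to 0$. For (ii), I would differentiate $x_i = y_i|y|^{\kappa-1}$ with respect to $y_j$, using $\partial|y|/\partial y_j = y_j/|y|$, to get the stated Jacobian matrix; the determinant is then a direct $2\times 2$ computation (the matrix is $|y|^{\kappa-3}$ times $|y|^2 I + (\kappa-1) y y^T$, whose determinant is $|y|^{2(\kappa-3)}\bigl(|y|^4 + (\kappa-1)|y|^2|y|^2\bigr) = \kappa |y|^{2\kappa-2}\cdot|y|^{2}\cdot|y|^{2(\kappa-3)}\cdot\ldots$), which I would tidy up to land on $\kappa|y|^{2\kappa-2}$.

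For (iii), the point is that for $y\neq 0$ the map $T_\kappa$ is a local $C^1$ diffeomorphism with $\det J_{T_\kappa}(y) = \pm\kappa|y|^{2\kappa-2}\neq 0$, so $\varphi = \psi\circ T_\kappa^{-1}$ is differentiable at $x = T_\kappa y$ iff $\psi$ is differentiable at $y$, by the chain rule applied in both directions. For (iv), the cleanest route is to use the polar gradient formula \eqref{gradientlaplacianpolar}: since $\varphi(s^\kappa,\sigma) = \psi(s,\sigma)$, the chain rule gives $\psi_s = \kappa s^{\kappa-1}\varphi_r$ and $\psi_\sigma = \varphi_\theta$, whence $\psi_s^2 + s^{-2}\psi_\sigma^2 = \kappa^2 s^{2\kappa-2}\varphi_r^2 + s^{-2}\varphi_\theta^2$; multiplying by $s^{2-2\kappa}$ and using $r = s^\kappa$ (so $s^{-2}s^{2-2\kappa} = s^{-2\kappa} = r^{-2}$) yields \eqref{fundamentalgradientspolar}. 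Then \eqref{fundamentalgradients} follows by bounding $\kappa^2\varphi_r^2 + r^{-2}\varphi_\theta^2$ between $\min\{1,\kappa^2\}$ and $\max\{1,\kappa^2\}$ times $\varphi_r^2 + r^{-2}\varphi_\theta^2 = |\nabla\varphi(x)|^2$, and recalling $\psi_s^2 + s^{-2}\psi_\sigma^2 = |\nabla\psi(y)|^2$. Finally, when $\psi$ is radial we have $\psi_\sigma = 0 = \varphi_\theta$, so \eqref{fundamentalgradientspolar} collapses to the exact identity \eqref{fundamentalgradientsradial}.

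I do not anticipate a serious obstacle here; this is essentially a bookkeeping lemma. The one place to be careful is the orientation/absolute-value in (ii) — $T_\kappa$ is orientation preserving so $\det J_{T_\kappa}>0$ and the absolute value is cosmetic, but I would keep it to match the statement — and making sure the powers of $s$ and $r$ are tracked correctly through the substitution $r = s^\kappa$ in (iv). It is also worth noting explicitly that all statements are for $y\neq 0$ (equivalently $x\neq 0$), since $T_\kappa$ is not differentiable at the origin when $\kappa<1$ and its inverse is not when $\kappa>1$; this is harmless for the later $H^1$ estimates because $\{0\}$ has zero capacity in ${\mathbb R}^2$, but that observation belongs to the sections where the lemma is applied rather than to the proof of the lemma itself.
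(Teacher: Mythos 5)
Your proposal is correct and follows essentially the same route as the paper, which simply records that parts i)--iii) are a matter of computation and derives iv) from the polar form $r=s^{\kappa}$, $\theta=\sigma$ via the chain rule identity $\psi_s=\kappa s^{\kappa-1}\varphi_r$, $\psi_\sigma=\varphi_\theta$, exactly as you do. Your write-up just supplies the details the paper leaves implicit (the eigenvalue computation $\det\bigl(|y|^2 I+(\kappa-1)yy^T\bigr)=\kappa|y|^4$ for ii), and the local diffeomorphism argument for iii)), so there is nothing to correct.
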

\begin{proof}
The statements from i), ii) and iii) are just matter of computation. Regarding iv), the identity \eqref{fundamentalgradientspolar} follows from \eqref{tkappapolar}. From \eqref{fundamentalgradientspolar} we infer that
\[
\min\{ 1, k^2 \} \left[ \varphi_ r^2 + \frac{1}{r^2} \varphi_{\theta}^2 \right] \leq  \left[\psi_s^2 + \frac{1}{s^2} \psi_{\sigma}^2\right]s^{2 -2\kappa}  \leq \max\{ 1, k^2 \} \left[ \varphi_ r^2 + \frac{1}{r^2} \varphi_{\theta}^2 \right]
\]
which combined with \eqref{gradientlaplacianpolar} implies \eqref{fundamentalgradients}. If $\psi$ is radially symmetric, it is also clear that \eqref{fundamentalgradientsradial} follows from \eqref{fundamentalgradientspolar} since $\psi_{\sigma} \equiv0$ and $\varphi_{\theta} \equiv 0$.
\end{proof}

From now on  in this section $\Omega \subset {\mathbb R}^2$ represents either a ball or an annulus centered at the origin and  we set $\Omega_{\kappa} = T^{-1}_{\kappa}(\Omega)$, where $T_{\kappa}$ is given by \eqref{tkappa}.

\begin{lemma} \label{lemma:lr}
Let $1 \leq r < \infty$. Then
\[
S_{\kappa}: L^r (\Omega_{\kappa}) \rightarrow L^r(\Omega, |x|^{\frac{2- 2\kappa}{\kappa}}), \ \ \text{defined by} \ \  S_{\kappa} \psi := \psi \circ T^{-1}_{\kappa},
\]
is a continuous linear isomorphism such that
\begin{equation}\label{lrnorms}
\int_{\Omega_{\kappa}} |\psi(y)|^r dy = \kappa^{-1} \int_{\Omega} |\varphi (x)|^r |x|^{\frac{2 - 2 \kappa}{\kappa}} dx, \ \ \text{with} \ \ \varphi = \psi \circ T^{-1}_{\kappa}.
\end{equation}
\end{lemma}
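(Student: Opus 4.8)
The plan is to reduce the entire statement to the single integral identity \eqref{lrnorms}, which is a direct change of variables, and then read off linearity, bijectivity and continuity of $S_\kappa$ from it.

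First I would prove \eqref{lrnorms} for an arbitrary nonnegative measurable function $\psi$ on $\Omega_\kappa$ (allowing the value $+\infty$ on both sides). By Lemma \ref{lemma:tkappa} i) the map $T_\kappa$ restricts to a $C^1$ diffeomorphism from $\Omega_\kappa\setminus\{0\}$ onto $\Omega\setminus\{0\}$; the origin, when it belongs to $\Omega$ (the ball case), is a Lebesgue-null set, and when $\Omega$ is an annulus it does not lie in $\overline\Omega$ at all, so in either case it may be discarded before applying the change-of-variables formula. Performing the substitution $x=T_\kappa y$ and using \eqref{jacobian}, which gives $dx=\kappa\,|y|^{2\kappa-2}\,dy$, together with the identity $|T_\kappa y|=\bigl||y|^{\kappa-1}y\bigr|=|y|^{\kappa}$ coming from \eqref{tkappa}, the weight transforms as $|x|^{\frac{2-2\kappa}{\kappa}}=\bigl(|y|^{\kappa}\bigr)^{\frac{2-2\kappa}{\kappa}}=|y|^{2-2\kappa}$, so that
\[
\int_\Omega |\varphi(x)|^r\,|x|^{\frac{2-2\kappa}{\kappa}}\,dx
=\int_{\Omega_\kappa}|\psi(y)|^r\,|y|^{2-2\kappa}\,\kappa\,|y|^{2\kappa-2}\,dy
=\kappa\int_{\Omega_\kappa}|\psi(y)|^r\,dy,
\]
where $\varphi=\psi\circ T_\kappa^{-1}$ as in the statement; the exponents $\frac{2-2\kappa}{\kappa}$, $2-2\kappa$ and $2\kappa-2$ conspire to leave only the bare factor $\kappa$. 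Since both sides are interpreted in $[0,+\infty]$, this holds for every measurable $\psi$, hence $\psi\in L^r(\Omega_\kappa)$ if and only if $\varphi\in L^r(\Omega,|x|^{\frac{2-2\kappa}{\kappa}})$, with the exact norm relation $\|S_\kappa\psi\|_{L^r(\Omega,|x|^{(2-2\kappa)/\kappa})}=\kappa^{1/r}\|\psi\|_{L^r(\Omega_\kappa)}$. (Note the weight $|x|^{(2-2\kappa)/\kappa}$ has exponent $2-\tfrac2\kappa<2$, hence is locally integrable in ${\mathbb R}^2$, so the target is a genuine weighted $L^r$ space.)

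Granting \eqref{lrnorms}, the remaining assertions are immediate. Linearity of $S_\kappa$ is clear, being composition with a fixed map; the norm identity above shows $S_\kappa$ is bounded and injective. For surjectivity, given $\varphi\in L^r(\Omega,|x|^{\frac{2-2\kappa}{\kappa}})$, set $\psi:=\varphi\circ T_\kappa$; then $\psi$ is measurable, \eqref{lrnorms} applied to $\psi$ yields $\psi\in L^r(\Omega_\kappa)$, and $S_\kappa\psi=\varphi$ since $T_\kappa^{-1}\circ T_\kappa=\mathrm{id}$ by Lemma \ref{lemma:tkappa} i). Thus $S_\kappa$ is a linear isomorphism which scales the norm by the constant $\kappa^{1/r}$, hence is continuous with continuous inverse. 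I do not anticipate any real obstacle: the content is entirely the change-of-variables computation, and the only care required is to excise the origin before invoking the diffeomorphism and to track the three exponents so that they cancel.
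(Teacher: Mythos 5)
Your proof is correct and follows essentially the same route as the paper: the identity \eqref{lrnorms} is obtained from the standard change-of-variables theorem using \eqref{inversetkappa} and \eqref{jacobian}, with the only cosmetic difference being that you excise the origin as a null set where the paper approximates the ball by annuli and passes to the limit by monotone convergence. (One trivial slip in your aside: the weight exponent is $\frac{2-2\kappa}{\kappa}=\frac{2}{\kappa}-2$, not $2-\frac{2}{\kappa}$, and the relevant integrability criterion in ${\mathbb R}^2$ is that the exponent exceed $-2$, which it does.)
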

\begin{proof}
In the case when $\Omega$ is an annulus centered at the origin, then \eqref{lrnorms} comes out as an application of the standard change of variables theorem, using \eqref{inversetkappa} and \eqref{jacobian}.

In the case when $\Omega = B(0,R)$ is a ball centered at the origin and radius $R>0$, the singularity at zero of $T_{\kappa}$ or $T_{\kappa}^{-1}$ causes no problem, since we can reduce the arguments to the previous case by approximation with annuli. Indeed, take into account that
\[
\int_{B(0,R)} |h(z)| dz = \lim_{\delta \rightarrow 0^+} \int_{B(0,R) \backslash B(0, \delta)}|h(z)| dz, \quad \forall \ h \in L^1(B(0,R)).
\]
Then the monotone convergence theorem, passing to the limit, gives the result for the ball.
\end{proof}

With the same arguments we can prove the following lemma.

\begin{lemma} \label{lemma:flrnorms}
Let $F:{\mathbb R} \rightarrow {\mathbb R}$ be a continuous function. Then $F \circ \psi \in L^1(\Omega_{\kappa})$ if, and only if, $F \circ \varphi  \in L^1(\Omega, |x|^{\frac{2- 2\kappa}{\kappa}})$ with $\varphi = \psi \circ T^{-1}_{\kappa}$. Moreover,
\begin{equation}\label{compositionf}
\int_{\Omega_{\kappa}} F(\psi(y)) dy = \kappa^{-1} \int_{\Omega} F(\varphi (x)) |x|^{\frac{2 - 2 \kappa}{\kappa}} dx.
\end{equation}
\end{lemma}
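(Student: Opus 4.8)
The plan is to follow the proof of Lemma~\ref{lemma:lr} verbatim, with $|F\circ\psi|$ (and then $F\circ\psi$) in place of $|\psi|^r$. A preliminary remark: since $T_\kappa^{-1}$ is a homeomorphism by Lemma~\ref{lemma:tkappa}~i) and $F$ is continuous, $F\circ\psi$ is measurable on $\Omega_\kappa$ if and only if $F\circ\varphi=(F\circ\psi)\circ T_\kappa^{-1}$ is measurable on $\Omega$, so that the two integrals in \eqref{compositionf} make sense simultaneously.

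First I would treat the case in which $\Omega$ is an annulus centered at the origin. Then $0\notin\overline\Omega$, $T_\kappa^{-1}=T_{1/\kappa}$ is a $C^1$ diffeomorphism on a neighbourhood of $\overline\Omega$, and \eqref{jacobian} (applied with $1/\kappa$ in place of $\kappa$, together with $|y|=|x|^{1/\kappa}$) gives
\[
\bigl|\det J_{T_{1/\kappa}}(x)\bigr| = \kappa^{-1}\,|x|^{\frac{2-2\kappa}{\kappa}} .
\]
Hence the standard change of variables $y=T_\kappa^{-1}x$ yields, for every nonnegative Borel function $G:{\mathbb R}\rightarrow[0,+\infty)$,
\[
\int_{\Omega_\kappa} G(\psi(y))\,dy = \kappa^{-1}\int_\Omega G(\varphi(x))\,|x|^{\frac{2-2\kappa}{\kappa}}\,dx .
\]
Taking $G=|F|$ proves the asserted equivalence of integrability; once $F\circ\psi\in L^1(\Omega_\kappa)$, applying the identity to $G=F^{+}$ and $G=F^{-}$ and subtracting gives \eqref{compositionf}.

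Next I would pass to the case $\Omega=B(0,R)$, for which $\Omega_\kappa=T_\kappa^{-1}(B(0,R))=B(0,R^{1/\kappa})$. The singularity of $T_\kappa^{-1}$ at the origin is dealt with exactly as in Lemma~\ref{lemma:lr}: for $0<\delta<R$, the previous step applied on the annulus $B(0,R)\setminus B(0,\delta)$, whose preimage under $T_\kappa$ is $B(0,R^{1/\kappa})\setminus B(0,\delta^{1/\kappa})$, gives
\[
\int_{B(0,R^{1/\kappa})\setminus B(0,\delta^{1/\kappa})} |F(\psi(y))|\,dy = \kappa^{-1}\int_{B(0,R)\setminus B(0,\delta)} |F(\varphi(x))|\,|x|^{\frac{2-2\kappa}{\kappa}}\,dx ,
\]
and letting $\delta\to0^{+}$ the monotone convergence theorem (the integrands increasing as $\delta$ decreases) yields both the equivalence of integrability and, running the argument once more with $F$ in place of $|F|$ and $F\circ\psi\in L^1$ now known, the identity \eqref{compositionf}.

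The computation carries no real difficulty; the only points deserving attention are (a) extracting the weight $\kappa^{-1}|x|^{\frac{2-2\kappa}{\kappa}}$ correctly from \eqref{jacobian} via the substitution $|y|=|x|^{1/\kappa}$, and (b) licensing the change of variables across the origin in the ball case, which --- just as in the proof of Lemma~\ref{lemma:lr} --- is recovered from the annular case by the annular exhaustion together with monotone convergence.
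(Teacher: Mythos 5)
Your proof is correct and follows exactly the route the paper intends: the paper proves this lemma by simply remarking that the same arguments as in Lemma~\ref{lemma:lr} apply, namely the change of variables with Jacobian $\kappa^{-1}|x|^{\frac{2-2\kappa}{\kappa}}$ on annuli followed by annular exhaustion and monotone convergence for the ball. Your additional care in splitting $F=F^{+}-F^{-}$ to pass from the nonnegative case to the signed identity is a correct (and slightly more explicit) rendering of the same argument.
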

We point out that if $\kappa = \frac{2}{\alpha +2}$, then $\frac{2- 2\kappa}{\kappa}= \alpha$ and so the weights $|x|^{\frac{2 - 2 \kappa}{\kappa}}$ at \eqref{compositionf} and $|x|^{\alpha}$ at \eqref{weight+f} coincide.

\begin{lemma} \label{lemma:h1}
The application
\[
S_{\kappa}: H^1_0 (\Omega_{\kappa}) \rightarrow H^1_0(\Omega), \ \ \text{defined by} \ \  S_{\kappa} \psi := \psi \circ T^{-1}_{\kappa},
\]
is a continuous linear isomorphism. Moreover, with  $\varphi = \psi \circ T^{-1}_{\kappa}$,
\begin{equation}\label{h1norms}
\min\left\{ \kappa,\frac{1}{\kappa}\right\} \int_{\Omega} |\nabla \varphi (x)|^2 dx \leq \int_{\Omega_{\kappa}} |\nabla \psi(y)|^2 dy \leq \max\left\{ \kappa,\frac{1}{\kappa}\right\} \int_{\Omega} |\nabla \varphi (x)|^2 dx, 
\end{equation}
for all  $\psi \in H^1_{0}(\Omega_{\kappa})$ and
\begin{equation}\label{h1norms=}
\kappa \int_{\Omega} |\nabla \varphi (x)|^2 dx = \int_{\Omega_{\kappa}} |\nabla \psi(y)|^2 dy,  \ \ \forall \ \psi \in H^1_{0,{\rm{rad}}}(\Omega_{\kappa}).
\end{equation}
\end{lemma}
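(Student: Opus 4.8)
The plan is to read off everything from the change-of-variables formula together with the pointwise gradient identities \eqref{fundamentalgradients} and \eqref{fundamentalgradientsradial}, after a density step that removes the origin. Note first that $\Omega_{\kappa}$ is again a ball or an annulus centered at $0$, since by \eqref{inversetkappa} the inverse $T_{\kappa}^{-1}=T_{1/\kappa}$ acts radially as $r\mapsto r^{1/\kappa}$. Since $\Omega$ and $\Omega_{\kappa}$ are bounded, Poincaré's inequality makes the Dirichlet seminorm $\|\nabla\cdot\|_{L^2}$ an equivalent norm on $H^1_0$, so it suffices to control gradient terms throughout.

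First I would establish the identities for $\psi\in C_c^\infty(\Omega_{\kappa}\setminus\{0\})$. For such $\psi$ the function $\varphi:=S_{\kappa}\psi=\psi\circ T_{\kappa}^{-1}$ belongs to $C_c^\infty(\Omega\setminus\{0\})\subset H^1_0(\Omega)$, because $T_{\kappa}^{-1}$ is a smooth diffeomorphism away from the origin and carries $\mathrm{supp}\,\psi$ to a compact subset of $\Omega$ at positive distance from $0$. The ordinary change of variables $x=T_{\kappa}y$ with \eqref{jacobian} gives
\[
\int_{\Omega_{\kappa}}|\nabla\psi(y)|^2\,dy=\frac{1}{\kappa}\int_{\Omega}|\nabla\psi(T_{\kappa}^{-1}x)|^2\,|x|^{\frac{2-2\kappa}{\kappa}}\,dx.
\]
Writing $y=T_{\kappa}^{-1}x$ and using $|x|^{\frac{2-2\kappa}{\kappa}}=|y|^{2-2\kappa}$ (from $|x|=|y|^{\kappa}$ in \eqref{tkappapolar}), the integrand on the right equals $|\nabla\psi(y)|^2|y|^{2-2\kappa}$, which by \eqref{fundamentalgradients} lies between $\min\{1,\kappa^2\}|\nabla\varphi(x)|^2$ and $\max\{1,\kappa^2\}|\nabla\varphi(x)|^2$. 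Since $\min\{1,\kappa^2\}/\kappa=\min\{\kappa,1/\kappa\}$ and $\max\{1,\kappa^2\}/\kappa=\max\{\kappa,1/\kappa\}$, this yields \eqref{h1norms}. If moreover $\psi$ is radial then so is $\varphi$, and \eqref{fundamentalgradientsradial} replaces the two-sided bound by the equality $|\nabla\psi(y)|^2|y|^{2-2\kappa}=\kappa^2|\nabla\varphi(x)|^2$, giving \eqref{h1norms=}.

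Next I would pass to the limit. A single point has zero $H^1$-capacity in dimension two, hence $C_c^\infty(\Omega_{\kappa}\setminus\{0\})$ is dense in $H^1_0(\Omega_{\kappa})$ and its radial version is dense in $H^1_{0,{\rm{rad}}}(\Omega_{\kappa})$ (equivalently, insert logarithmic cut-offs $\zeta_{\delta}$ vanishing near $0$, equal to $1$ outside $B(0,\delta)$, with $\|\nabla\zeta_{\delta}\|_{L^2}\to0$). Given $\psi\in H^1_0(\Omega_{\kappa})$, choose $\psi_n\in C_c^\infty(\Omega_{\kappa}\setminus\{0\})$ with $\psi_n\to\psi$; by linearity of $S_{\kappa}$, the left inequality in \eqref{h1norms} applied to $\psi_n-\psi_m$, and Poincaré on $\Omega$, the sequence $S_{\kappa}\psi_n$ is Cauchy in $H^1_0(\Omega)$ and converges to some $\varphi\in H^1_0(\Omega)$; extracting a subsequence, $\psi_n\circ T_{\kappa}^{-1}\to\psi\circ T_{\kappa}^{-1}$ a.e., so $\varphi=S_{\kappa}\psi$ a.e. Thus $S_{\kappa}$ maps $H^1_0(\Omega_{\kappa})$ into $H^1_0(\Omega)$ (and radial functions to radial functions), the estimates \eqref{h1norms} and \eqref{h1norms=} persist in the limit by continuity of the Dirichlet norm, and $S_{\kappa}$ is bounded and linear. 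Finally, running the same construction with $\kappa$ replaced by $1/\kappa$ and base domain $\Omega_{\kappa}$ — note $(\Omega_{\kappa})_{1/\kappa}=\Omega$ by \eqref{inversetkappa} — produces a bounded linear $S_{1/\kappa}:H^1_0(\Omega)\to H^1_0(\Omega_{\kappa})$, and since $T_{\kappa}\circ T_{1/\kappa}=\mathrm{id}=T_{1/\kappa}\circ T_{\kappa}$ we get $S_{1/\kappa}\circ S_{\kappa}=\mathrm{id}$ and $S_{\kappa}\circ S_{1/\kappa}=\mathrm{id}$; hence $S_{\kappa}$ is a topological isomorphism.

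The only point requiring genuine care is the density reduction in the ball case — equivalently, checking that $S_{\kappa}$ lands in $H^1_0(\Omega)$ and not merely in $H^1(\Omega\setminus\{0\})$; everything else is bookkeeping with the change of variables and the estimates of Lemma~\ref{lemma:tkappa}, in the same spirit as the annulus-approximation arguments already used in Lemmas~\ref{lemma:lr} and \ref{lemma:flrnorms}.
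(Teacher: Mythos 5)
Your proof is correct and follows essentially the same route as the paper: the change of variables with Jacobian \eqref{jacobian} combined with the pointwise gradient estimates \eqref{fundamentalgradients} and \eqref{fundamentalgradientsradial}, plus a limiting argument to handle the singularity of $T_{\kappa}$ at the origin. The only cosmetic difference is that the paper removes the origin by restricting to annuli $\Omega\setminus B(0,\delta)$ and letting $\delta\to0$ (as in Lemma \ref{lemma:lr}), whereas you invoke the zero $H^1$-capacity of a point in dimension two to get density of $C_c^\infty(\Omega_{\kappa}\setminus\{0\})$ in $H^1_0(\Omega_{\kappa})$; both devices accomplish the same reduction, and making explicit that this is precisely what fails for $N\geq 3$ (cf. Remark \ref{remarkN>=3H1}) is a welcome clarification.
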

\begin{proof}
Here we use \eqref{inversetkappa}, \eqref{jacobian}, \eqref{fundamentalgradients}, \eqref{fundamentalgradientsradial} and we proceed as in the proof of Lemma \ref{lemma:lr}.
\end{proof}

\begin{remark}\label{remarkN>=3H1}
Let $N\geq 3$, $\kappa>0$ and consider the homeomorphism $T_{\kappa}: {\mathbb R}^N \rightarrow {\mathbb R}^N$ defined by
\[
T_{\kappa}(y_1, \ldots, y_N) = (y_1, \ldots, y_N)|(y_1, \ldots, y_N)|^{\kappa-1}
\]
i.e. the same as \eqref{tkappa} but in ${\mathbb R}^N$. Then observe that a result like the one of Lemma \ref{lemma:h1} cannot hold. For example, consider $\Omega = B(0,1)$ and $\psi (y) = |y|^{-\gamma}-1$, with $0< \gamma< \frac{N-2}{2}$ and $0 < \kappa \leq \frac{2\gamma}{N-2}$. Then, under these conditions, $\psi \in H^1_0(\Omega_{\kappa})$ but $\psi \circ T_{\kappa}^{-1} \notin H^1_0(\Omega)$.
\end{remark}

\subsection{Equivalence between some weighted and non-weighted elliptic equations in the setting of radial solutions} \label{section:radial}
Hereafter in this section we consider the change of variable \eqref{tkappa} restricted to radial functions. In this setting it was already used in \cite{ClementdeFigueiredoMitidieri, CowanGhoussoub, GladialiGrossiNeves}.

Let $\Omega \subset {\mathbb R}^2$ be either a ball or an annulus centered at the origin and set $\Omega_{\kappa} = T^{-1}_{\kappa}(\Omega)$, where $T_{\kappa}$ is given by \eqref{tkappa}. For a radial function $u: \Omega \subset {\mathbb R}^2 \rightarrow {\mathbb R}$ we define the radial function $v: \Omega_{\kappa} \rightarrow {\mathbb R}$ by setting $v(y) = u(T_{\kappa}y)$, i.e.,
\begin{equation}\label{change1}
v(s) = u(s^k) = u(r), \quad r = s^{\kappa}, \ r = |x|, \ s = |y|.
\end{equation}
Then an easy computation yields
\begin{equation}\label{eqnova}
v_{ss}(s) + \frac{1}{s} v_{s}(s) = \kappa^2s^{2\kappa -2} \left[ u_{rr}(s^{\kappa}) + \frac{1}{s^{\kappa}} u_r(s^{\kappa}) \right], \ \  s>0.
\end{equation}
So, using the previous notation in polar coordinates,  we infer that
\begin{equation}\label{laplacians}
\Delta v (y)  = \kappa^2 |y|^{2\kappa -2} \Delta u (T_{\kappa}y) = \kappa^2 |x|^{2 -\frac{2}{\kappa}} \Delta u (x), \ \ r = |x|,  \ \ s = |y|, \ \ r = s^{\kappa}.
\end{equation}
Hence, if $u$ is a radial solution of the Hénon type equation \eqref{weight+f}, then $v: \Omega_{\kappa} \rightarrow {\mathbb R}$ is a radial function that satisfies
\[
-\Delta v(y) = \kappa^2|y|^{2\kappa -2 + \kappa \alpha} f(v(y)),  \quad y \in \Omega_{\kappa}, \quad v =0 \quad \text{on} \quad \partial \Omega_{\kappa}.
\]
Thus if we choose $\kappa$ such that
\begin{equation}\label{kappabom}
2\kappa -2 + \kappa \alpha = 0, \quad \text{i.e.}, \quad \kappa = \frac{2}{\alpha+2},
\end{equation}
then we infer that
\begin{equation}\label{reduced}
-\Delta v (y) = \left(\frac{2}{\alpha+2}\right)^2 f(v(y)), \quad y \in \Omega_{\kappa}, \quad v = 0 \quad \text{on} \quad \partial \Omega_{\kappa}.
\end{equation}

\begin{remark}\label{changelaplacian}
It is clear that, in general, the change of variable \eqref{tkappa} does not satisfy
\[
\Delta_{y} = \kappa^2 |x|^{2- \frac{2}{\kappa}}\Delta_x,
\]
as it does for radial functions; cf. \eqref{laplacians}. Indeed from \eqref{laplacianpolar} it is evident that also the angular part should be taken into account to write the complete Laplacian. However, see Proposition \ref{comparisonlemma}, the change of variable \eqref{tkappa}, with $\kappa = \frac{2}{\alpha+2}$, turns out to be very useful to compare the Morse index of a radial solution $u$ of \eqref{weight+f} and the Morse index of the corresponding radial solution $v = u \circ T_{\kappa}$ of \eqref{reduced}.
\end{remark}

\begin{remark}\label{doesnotworkN>=3}
Let $N\geq 3$ and $\alpha >0$. Then it is easy to see, just a matter of computation as in \cite[Proposition 4.2]{CowanGhoussoub}, that it is not possible to find a one dimensional change of variable
\[
r = s^{\kappa}, \quad r = |x|,  \quad s = |y|, \quad x, y \in {\mathbb R}^N,
\]
that is, to find  $\kappa$, such that
\[
v(s) = u(s^{\kappa}) \ \ \text{and} \ \ \Delta_{y} v = C \frac{1}{|x|^{\alpha}}\Delta_x u, \ \ C \ \ \text{constant},
\]
in the setting of radial functions defined in ${\mathbb R}^N$. This is one of the reasons why the proofs of this paper cannot be extended to dimension $3$ or higher.
\end{remark}

\begin{proof}[\textbf{Proof of Theorem \ref{theorem:radialsolutions}}]
${}$

\noindent i) This can be deduced by the analogous result for Lane-Emden equation \eqref{p>2}, cf. \cite[Theorem 2.15]{Ni1983} and \cite[p. 263]{Kajikiya1990}, by using the transformation \eqref{change1} and the identitie \eqref{laplacians} with $\kappa = \frac{2}{\alpha+2}$.

\medbreak

\noindent ii)  Let $u_{\alpha}$ be a least energy nodal radial solution of \eqref{henon}. Since the Morse index of $u_{\alpha}$ in $H^1_{0, {\rm{rad}}}(\Omega)$ is two, then $u_{\alpha}$ has precisely two nodal regions and then $u_{\alpha} (x) = \left(\frac{\alpha+2}{2}\right)^{\frac{2}{p-1}} U_{\alpha}(|x|^{\frac{\alpha}{2}}x)$ where $U_{\alpha}$, up to multiplication by $-1$, is the unique least energy nodal radial solution of \eqref{p>2} in $\Omega_{\alpha}= \{ |x|^{\frac{\alpha}{2}}x; \, x \in \Omega \}$. Moreover, the equation \eqref{laplacians} with $\kappa = \frac{2}{\alpha+2}$ guarantees that $u_{\alpha}$ is a degenerate radial solution of \eqref{henon} in the space $H^1_{0, {\rm{rad}}}(\Omega)$ if, and only if, $U_{\alpha}$ is a degenerate radial solution of the Lane-Emden equation \eqref{p>2} in $\Omega_{\alpha}$ in the space $H^1_{0, {\rm{rad}}}(\Omega_{\alpha})$.

So the above argument reduces the proof to the case $\alpha=0$, i.e. to the Lane-Emden equation. With $\alpha=0$ and in the case that $\Omega$ is an annulus, this non-degeneracy result is known; cf. \cite[Proposition 4]{PacellaSalazar}. Next, essentially, we mimic the arguments from \cite[Proposition 4]{PacellaSalazar} to include both cases of a ball and an annulus.

Let $u$ be a least energy nodal radial solution of \eqref{p>2}. We know that $u$ has precisely two nodal sets and Morse index $2$ in the space $H^1_{0, {\rm{rad}}}(\Omega)$. By contradiction, suppose that $u$ is degenerate in $H^1_{0, {\rm{rad}}}(\Omega)$. Then the third eigenvalue in the space $H^1_{0, {\rm{rad}}}(\Omega)$ of $L_u = -\Delta - p|u|^{p-1}$ is zero, and hence there exists $w$, a radial solution of 
\begin{equation}\label{zeroeigenvalue}
 - \Delta w = p |u|^{p-1} w \ \ \text{in} \ \ \Omega, \quad w = 0 \ \ \quad{on} \ \ \partial \Omega,
\end{equation}
with precisely three nodal regions. Now consider the auxiliary function
\[
z = x \cdot \nabla u + \frac{2}{p-1} u.
\]
Then, by direct computation, we obtain that
\begin{equation}\label{auxiliaryz}
 -\Delta z = p |u|^{p-1}z \ \ \text{in} \ \ \Omega, \quad z(x) = x \cdot \nabla u (x), \ \ x \in \partial \Omega.
\end{equation}
Next we multiply \eqref{zeroeigenvalue} by $z$, \eqref{auxiliaryz} by $w$ and we integrate by parts. The two resulting identities yield
\[
\int_{\partial \Omega}  \left[x \cdot \nabla u (x)\right] \frac{\partial w}{\partial \nu}(x) \, dS = 0.
\]
However, if $\Omega$ is either a ball or an annulus,  by the Hopf lemma, we infer that
\[
\left[x \cdot \nabla u (x)\right] \frac{\partial w}{\partial \nu}(x) > 0 \ \ \text{on} \ \ \partial \Omega \quad \text{or} \quad \left[x \cdot \nabla u (x)\right] \frac{\partial w}{\partial \nu}(x) < 0 \ \ \text{on} \ \ \partial \Omega, 
\]
since $u$ and $w$ have two and three nodal regions, respectively. Hence, the proof is complete.
\end{proof}

\section{Proof of Theorem \ref{maintheoremN=2}} \label{section:proofmaintheorem}
Let $\Omega \subset {\mathbb R}^2$ be either a ball or an annulus centered at the origin. Let $\alpha> 0$ and $f: {\mathbb R} \rightarrow {\mathbb R}$ be $C^{1, \beta}$ on bounded sets of ${\mathbb R}$. From now on we take $\kappa = \frac{2}{\alpha+2}$ as in \eqref{kappabom} and $\Omega_{\kappa} = T_{\kappa}^{-1}(\Omega)$, with $T_{\kappa}$ as in \eqref{tkappa}. Given $u \in H^1_0(\Omega)$ and $v \in H^1_0(\Omega_{\kappa})$, consider the bilinear forms
\[
Q_u(U,W) = \int_{\Omega} \nabla U \nabla W dx - \int_{\Omega} |x|^{\alpha} f'(u) UW dx, \quad U,W \in H^1_0(\Omega)
\]
and
\[
\mathcal{Q}_{v}(\mathcal{U}, \mathcal{W}) = \int_{\Omega_{\kappa}}\nabla \mathcal{U} \nabla \mathcal{W} dy - \left( \frac{2}{\alpha+ 2}\right)^2 \int_{\Omega_{\kappa}} f'(v)\mathcal{U} \mathcal{W} dy, \ \ \mathcal{U}, \mathcal{W} \in H^1_0(\Omega_{\kappa})
\]
associated with \eqref{weight+f} and \eqref{reduced}, respectively. The crucial point for the proof of Theorem \ref{maintheoremN=2} is the following result.

\begin{proposition}\label{comparisonlemma}
Let $v, \psi \in H^1_{0}(\Omega_{\kappa})$ and set $u = v \circ T_{\kappa}^{-1}$ and $\varphi = \psi \circ T_{\kappa}^{-1}$. Then
 \begin{equation}\label{generalenergy}
 \mathcal{Q}_v(\psi, \psi) \geq \frac{2}{\alpha+2} Q_u(\varphi, \varphi), \quad \forall \,  \psi \in H^1_{0}(\Omega_{\kappa})
 \end{equation}
and
 \begin{equation}\label{radialenergy}
 \mathcal{Q}_v(\psi, \psi) = \frac{2}{\alpha+2} Q_u(\varphi, \varphi), \quad \forall \,  \psi \in H^1_{0,{\rm{rad}}}(\Omega_{\kappa}).
 \end{equation}

\end{proposition}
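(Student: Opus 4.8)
The plan is to compute the two quadratic forms term by term using the change of variable $T_\kappa$ and the estimates from Lemma \ref{lemma:h1} and Lemma \ref{lemma:flrnorms} (applied with $F(t) = f'(u(\cdot))t^2$ in disguise, or more directly via the pointwise change-of-variable identity). First I would handle the zeroth-order (potential) terms: since $u = v \circ T_\kappa^{-1}$ and $\varphi = \psi \circ T_\kappa^{-1}$, applying the substitution $x = T_\kappa y$ with Jacobian $|\det J_{T_\kappa}(y)| = \kappa |y|^{2\kappa - 2}$ from \eqref{jacobian}, together with the choice $\kappa = \frac{2}{\alpha+2}$ which makes $2\kappa - 2 = -\kappa\alpha$ so that $|y|^{2\kappa-2}\,dy$ pulls back to $\kappa^{-1}|x|^{\alpha}\,dx$ after inversion, I get
\[
\left(\frac{2}{\alpha+2}\right)^2 \int_{\Omega_\kappa} f'(v)\psi^2\,dy = \frac{2}{\alpha+2}\int_{\Omega} |x|^{\alpha} f'(u)\varphi^2\,dx,
\]
exactly — this uses only Lemma \ref{lemma:flrnorms} type bookkeeping and no inequality. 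So the potential terms already match with the constant $\frac{2}{\alpha+2}$ on the nose; this is the easy half.

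Next I would treat the gradient (Dirichlet) terms, which is where the inequality in \eqref{generalenergy} versus the equality in \eqref{radialenergy} comes from. By \eqref{h1norms} of Lemma \ref{lemma:h1},
\[
\int_{\Omega_\kappa} |\nabla \psi|^2\,dy \geq \min\{\kappa, \tfrac{1}{\kappa}\}\int_\Omega |\nabla\varphi|^2\,dx,
\]
and since $\kappa = \frac{2}{\alpha+2} < 1$ (because $\alpha > 0$), we have $\min\{\kappa, \frac{1}{\kappa}\} = \kappa = \frac{2}{\alpha+2}$, so the Dirichlet term also scales by exactly the factor $\frac{2}{\alpha+2}$ in the correct direction. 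Adding the gradient bound and the potential identity gives
\[
\mathcal{Q}_v(\psi,\psi) = \int_{\Omega_\kappa}|\nabla\psi|^2\,dy - \left(\tfrac{2}{\alpha+2}\right)^2\int_{\Omega_\kappa}f'(v)\psi^2\,dy \geq \frac{2}{\alpha+2}\left(\int_\Omega |\nabla\varphi|^2\,dx - \int_\Omega |x|^\alpha f'(u)\varphi^2\,dx\right) = \frac{2}{\alpha+2}Q_u(\varphi,\varphi),
\]
which is \eqref{generalenergy}. For \eqref{radialenergy}, when $\psi$ is radial one instead uses the sharp identity \eqref{h1norms=}, namely $\kappa\int_\Omega|\nabla\varphi|^2\,dx = \int_{\Omega_\kappa}|\nabla\psi|^2\,dy$, which turns the inequality into an equality while the potential term is unchanged (it was an equality all along); hence both terms carry the exact factor $\frac{2}{\alpha+2}$ and the two forms are proportional.

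The main subtlety — really the only place requiring care rather than bookkeeping — is making sure all the integral identities and the chain rule $\nabla(\psi\circ T_\kappa^{-1})$ are legitimate given that $T_\kappa$ and $T_\kappa^{-1}$ are singular at the origin. I would handle this exactly as in the proof of Lemma \ref{lemma:lr}: when $\Omega$ is an annulus there is no issue and the classical change-of-variables theorem applies directly; when $\Omega$ is a ball one approximates by annuli $\Omega\setminus B(0,\delta)$, establishes the identities there, and passes to the limit using monotone convergence for the potential term (after noting $f'(u)$ is bounded on the bounded set where $u$ ranges, so the integrand is controlled) and the fact that $S_\kappa$ is a genuine isomorphism of $H^1_0$ spaces (Lemma \ref{lemma:h1}) for the Dirichlet term. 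Once the density/approximation point is settled, everything else is the term-by-term matching described above, and the distinction between \eqref{generalenergy} and \eqref{radialenergy} is precisely the distinction between \eqref{h1norms} and \eqref{h1norms=} in Lemma \ref{lemma:h1}.
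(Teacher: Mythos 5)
Your proof is correct and is exactly the argument the paper intends: the paper's own proof is the one-line statement that the proposition follows from Lemmas \ref{lemma:flrnorms} and \ref{lemma:h1}, and you have filled in precisely those details --- the potential terms transform with the exact factor $\frac{2}{\alpha+2}$ via the Jacobian identity, while the Dirichlet terms obey the inequality \eqref{h1norms} (with $\min\{\kappa,\frac{1}{\kappa}\}=\kappa$ since $\kappa=\frac{2}{\alpha+2}<1$) in general and the equality \eqref{h1norms=} in the radial case. No gaps; the observation about handling the singularity at the origin by approximation with annuli matches the technique already used in Lemma \ref{lemma:lr}.
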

\begin{proof} It is a direct consequence of Lemmas \ref{lemma:flrnorms} and \ref{lemma:h1}.
\end{proof}

\begin{proof}[\textbf{Proof of Theorem \ref{maintheoremN=2}}] Let $u$ be a radial nodal solution of \eqref{weight+f}. Then, define $v:\Omega_{\kappa} \rightarrow {\mathbb R}$ by setting $v(y) = u(T_{\kappa}(y))$, with $\kappa  = \frac{2}{\alpha+ 2}$. Hence $v$ is a radial nodal solution of \eqref{reduced}. Observe that the  eigenvalue problem for the linearized operator associated with \eqref{reduced} is
\begin{equation}\label{linearizedreduced}
 -\Delta \psi - \left(\frac{2}{\alpha+2}\right)^2 f'(v) \psi = \lambda \psi \quad \text{in} \quad \Omega_{\kappa}, \qquad
 \psi = 0 \quad \text{on} \quad \partial \Omega_{\kappa}.
\end{equation}
Hence, if $\psi$ is a radial eigenfunction of \eqref{linearizedreduced} then, writing $\psi(s) = \varphi(s^{\frac{2}{\alpha+2}})$,  we infer from \eqref{laplacians} and \eqref{kappabom} that $\varphi$ is a radial eigenfunction of
\begin{equation}\label{linearized}
 -\Delta \varphi - |x|^{\alpha} f'(u) \varphi = \lambda  \left(  \frac{\alpha+2}{2}\right)^{2} |x|^{\alpha} \varphi \ \ \text{in} \ \ \Omega, \qquad
 \varphi = 0 \quad \text{on} \quad \partial \Omega.
\end{equation}

We know, from \cite{AftalionPacella}, that the Morse index of $v$ is at least $3$ and greater than or equal to $n(u)+2$ if \eqref{H:superlinear} is satisfied; cf. Theorem A and Remark \ref{remark4int} in the introduction. More precisely, the problem \eqref{linearizedreduced} has a negative eigenvalue $\lambda_{1,{\rm{rad}}}$ (the first eiganvalue) with a corresponding radial eigenfunction $\psi_{1,{\rm{rad}}}$ and there are two other negative eigenvalues $\lambda_2=\lambda_3$ with corresponding eigenfunctions $\psi_2$ and $\psi_3$. Moreover, see \cite{AftalionPacella},
\begin{equation}\label{oddevenaftalionpacella}
\begin{array}{l}
\psi_2(y_1, y_2) \ \ \text{is even w.r.t.} \ \ y_2 \ \ \text{and} \ \ \text{odd w.r.t.} \ \ y_1,\\
\psi_3(y_1, y_2) \ \ \text{is even w.r.t.} \ \ y_1 \ \ \text{and} \ \ \text{odd w.r.t.} \ \ y_2.\\
\end{array}
\end{equation}
Hence, in particular, 
\[
\mathcal{Q}_v(\psi_{1, {\rm{rad}}}, \psi_{1, {\rm{rad}}}) < 0 \ \ \text{and} \ \ \mathcal{Q}_v(\psi_{i}, \psi_{i}) < 0, \quad  \ \ i=2,3.
\]
Moreover, if \eqref{H:superlinear} is satisfied then the radial eigenvalues of \eqref{linearizedreduced}, up to the $n(u)$-th, are also negative. In this case let us denote these eigenvalues by $\lambda_{i, {\rm{rad}}}$ and the associated radial eigenfunctions by $\psi_{i, {\rm{rad}}}$, $i=2, \ldots, n(u)$.

As we have observed, the change of variable $s \mapsto s^{\kappa}$, guarantees that $\varphi_{i, {\rm{rad}}}$, defined by $\psi_{i, {\rm{rad}}} (y) = \varphi_{i, {\rm{rad}}}(T_{\kappa}(y))$ with $i=1,2, \ldots, n(u)$, are radial eigenfunction of \eqref{linearized} with $\lambda = \lambda_{i, {\rm{rad}}}$. Eventhough, $\varphi_2$ and $\varphi_3$ defined by $\varphi_{i}(x) = \psi_{i} (T_{\kappa}^{-1}x)$, $i=2,3$, are not eigenfunctions of \eqref{linearized}, they correspond to directions in which the quadratic form induced by $Q_u$ is negative definite, which follows from \eqref{generalenergy}.  Moreover, using that
\begin{enumerate}[i)] 
\item $\varphi_{i, {\rm{rad}}}$, $i=1,2, \ldots, n(u)$, are eigenfunctions of \eqref{linearized} with $\lambda = \lambda_{i, {\rm{rad}}}$;
\item the symmetries of $\varphi_{1, {\rm{rad}}}, \varphi_{2, {\rm{rad}}}, \ldots, \varphi_{n(u),{\rm{rad}}}, \varphi_2, \varphi_3$; 
\end{enumerate}
it is simple to verify that  $\varphi_{1, {\rm{rad}}}, \varphi_{2, {\rm{rad}}}, \ldots, \varphi_{n(u),{\rm{rad}}}, \varphi_2, \varphi_3$ are mutually orthogonal with respect to both the bilinear forms
\[
\begin{array}{l}
(U,W) \mapsto \int_{\Omega} |x|^{\alpha} U W dx, \quad \text{and}\\ \\
(U,W) \mapsto Q_u(U,W) = \int_{\Omega} \left[ \nabla U \nabla W - |x|^{\alpha} f'(u) UW \right]dx.
\end{array}
\]
Therefore, we infer that $Q_u(w,w) < 0$ for every nonzero $w$ in the span $\left[\varphi_{1, {\rm{rad}}}, \varphi_2, \varphi_3  \right]$ or for every nonzero $w$ in the span $\left[\varphi_{1, {\rm{rad}}}, \varphi_{2, {\rm{rad}}}, \ldots, \varphi_{n(u),{\rm{rad}}}, \varphi_2, \varphi_3 \right]$ if \eqref{H:superlinear} is satisfied. This proves Theorem \ref{maintheoremN=2}. \qedhere
\medbreak
\end{proof}

\section{Other changes of variables: proof of Theorem \ref{theoremalpha=2}}  \label{section:doublysymmetric}
To the aim of proving Theorem \ref{theoremalpha=2} we now consider a variant of the change of variable in ${\mathbb R}^2$ defined in Section \ref{section:generalchangeofvariables}, which involves changing both polar coordinates $r$ and $\theta$. 

Given $\kappa>0$ and $m \in {\mathbb N}$ we set
\begin{equation}\label{tkappam}
\begin{array}{l}
T_{\kappa,m}: [0, \infty) \times [0, 2\pi] \rightarrow [0, \infty) \times [0, \frac{2\pi}{m}], \vspace{6pt}\\
T_{\kappa,m}(s, \sigma) := \left(s^{\kappa}, \frac{\sigma}{m}\right), \ \ r = s^{\kappa}, \ \ \theta = \frac{\sigma}{m}.
\end{array}
\end{equation}
Obviously $T_{\kappa,1}$ is just $T_{\kappa}$ of \eqref{tkappapolar}.

Consider any continuous function $\psi$ defined on a radially symmetric domain $\Omega$ in ${\mathbb R}^2$ in the cartesian coordinates $(y_1,y_2)$. Then, as in Section \ref{section:generalchangeofvariables}, using the polar coordinates
\[
y_1 = s \cos \sigma, \ \ y_2 = s \sin\sigma, \ \ s= \sqrt{|y_1|^2 + |y_2|^2},
\]
we can write
\[
\psi(y_1, y_2) 
= \psi(s \cos \sigma, s \sin \sigma) = \psi (s, \sigma), \ \  \text{with} \ \ \sigma \in [0, 2\pi] \ \ \text{and} \ \ \psi(s,0) = \psi(s, 2\pi).
\]
We then set
\begin{equation}\label{psivarphim}
\varphi(x_1, x_2) = \varphi(r, \theta) = \psi(T_{\kappa,m}^{-1}(r, \theta)).
\end{equation}
Hence $\varphi$ is a function defined for $\theta \in [0, \frac{2\pi}{m}]$ which, since $\psi(s, 0) = \psi(s, 2 \pi)$, can be extended $\frac{2\pi}{m}$-periodically and continuously for all $\theta \in [0, 2\pi]$. We still denote this extension by $\varphi$ and we observe that if it is smooth, by direct computation, then we have
\begin{equation}\label{laplacianspolarkappam}
\kappa^2 r^{2 - \frac{2}{\kappa}}\left[\varphi_{rr} + \frac{1}{r} \varphi_{r} + \frac{1}{r^2} \varphi_{\theta \theta}\right] =  \psi_{ss} + \frac{1}{s} \varphi_{s} + \frac{m^2 \kappa^2}{s^2} \psi_{\sigma \sigma}.
\end{equation}
Hence if we choose $\kappa = \frac{1}{m}$, for the Laplacian in cartesian coordinates we have 
\begin{equation}\label{fundamental2m}
m^{-2} |x|^{2( 1-m)} \Delta \varphi (x) = \Delta \psi(y).
\end{equation}

In view of the relation \eqref{fundamental2m} involving the Laplacians of $\varphi$ and $\psi$, we will apply the above procedure to work with the Hénon type equations \eqref{weight+f} in the case that $\alpha = 2 (m-1)$, with $m \geq 2$, that is for every $\alpha$ even. Indeed 
\begin{equation}\label{4.3'}
\alpha = 2 (m-1) \Longleftrightarrow \kappa  = \frac{1}{m} = \frac{2}{\alpha+2}
\end{equation}
which coincides with the relation \eqref{kappabom} between $\kappa$ and $\alpha$.

Note that, in view of the complex plane, the above transformation $T_{\frac{1}{m},m}$ is just the one which sends $z$ into $z^{\frac{1}{m}}$, $z \in {\mathbb C}$.

\begin{remark}\label{remarkpacellasrikanth}
Observe that, in the particular case when $m$ is even, if $\psi$ is a function such that
\[
\psi(y_1, y_2) = \psi(y_1, -y_2), \ \ (y_1, y_2) \in {\mathbb R}^2
\]
i.e. even with respect to $y_2$, then the extended function $\varphi(x_1, x_2)$, given by $\psi = \varphi \circ T_{\frac{1}{m},m}$, is such that $\varphi$ is even with respect to $x_1$ and $x_2$, that is
\[
\varphi(x_1, x_2) = \varphi(|x_1|, |x_2|), \ \ (x_1, x_2) \in {\mathbb R}^2.
\]
Hence functions that are symmetric with respect to one axis produce functions that are symmetric with respect to both axes.
\medbreak
\end{remark}

With the above choice of $\alpha$ we consider a radial nodal solution $u$ of \eqref{weight+f}. By Theorem \ref{maintheoremN=2} we know that $u$ has Morse index greater than or equal to $3$ and at least $n(u)+2$ if \eqref{H:superlinear} is also satisfied. We will use the change of variable \eqref{tkappam} with $\kappa = \frac{1}{m}$ to construct $\alpha + 2 = 2m$ convenient non-radial directions on which the quadratic form $Q_u(w,w)$ is negative. 

We can now proceed with the proof of Theorem \ref{theoremalpha=2}.

\begin{proof}[\textbf{Proof of Theorem \ref{theoremalpha=2}}]
Let $\alpha = 2(m-1)$, with $m \geq 2$, $\kappa = \frac{1}{m}$, and let $u$ be a radial nodal solution of \eqref{weight+f}. Then, by \eqref{reduced}, the radial function $v = u \circ T_{\kappa}$ solves 
\[
-\Delta v = \frac{1}{m^2} f(v) \quad \text{in} \quad \Omega_{\kappa}, \quad v = 0 \quad \text{on} \quad \partial \Omega_{\kappa}.
\]
Therefore, by the results of \cite{AftalionPacella}, already used at \eqref{oddevenaftalionpacella}, there exist two eigenfunctions $\psi_2$ and $\psi_3$ for the eigenvalue problem
\begin{equation}\label{4.3''}
-\Delta \psi  - \frac{1}{m^2} f'(v) \psi = \lambda \psi \quad \text{in}  \quad \Omega_{\kappa}, \quad \psi = 0 \quad \text{on} \quad \partial \Omega_{\kappa},
\end{equation}
with the following properties:
\begin{enumerate}[i)]
\item the corresponding eigenvalues $\lambda_2 = \lambda_3$ are negative;
\item $\psi_2$ is even with respect to $y_2$ and odd with respect to $y_1$, while $\psi_3$ is even with respect to $y_1$ and odd with respect to $y_2$;
\item $\psi_2 (y_1, y_2) > 0$ if $y_1>0$, while $\psi_3(y_1, y_2) > 0$ if $y_2 >0$.
\end{enumerate}

Next, applying the change of variables \eqref{tkappam}, we consider the functions $\varphi_{m,i}(r, \theta) = \psi_i \circ T_{\frac{1}{m},m}^{-1}(r, \theta)$, $i=2,3$, extended by periodicity as before for all $\theta \in [0, 2 \pi]$, so to have them defined on the whole $\Omega$. Then, by the conditions $\frac{\partial \psi_2}{\partial \sigma} = 0$ and $\psi_3 = 0$ at $\sigma=0$, we have that $\varphi_{m, i}$, $i=2,3$, are $C^2(\overline{\Omega})$-functions and by \eqref{fundamental2m} they satisfy
\begin{equation}\label{fiftheigenvalue}
-\Delta \varphi  - |x|^{\alpha}f'(u) \varphi =  \lambda |x|^{\alpha}\varphi \quad \text{in}  \quad \Omega, \quad \varphi = 0 \quad \text{on} \quad \partial \Omega,
\end{equation}
with $\lambda =\lambda_i  m^2$. Moreover it is easy to see that both $\varphi_{m,i}$, $i=2,3$, have $2m$ nodal sets, each one being an angular sector of amplitude $\frac{\pi}{m}$. This means that each one is a first eigenfunction of \eqref{fiftheigenvalue} in that sector with corresponding eigenvalue $\lambda_i m^2 < 0$. In particular $\varphi_{m, 2}$ is the first eigenfunction in the sector
\[
\Omega_{m,2} = \left\{ (x_1, x_2) = (r \cos \theta, r \sin \theta) \in \Omega, \theta \in \left[- \frac{\pi}{2m}, \frac{\pi}{2m}\right] \right\},
\]
while $\varphi_{m,3}$ is the first eigenfunction in the sector
\[
\Omega_{m,3} = \left\{ (x_1, x_2) = (r \cos \theta, r \sin \theta) \in \Omega, \theta \in \left[0, \frac{\pi}{m}\right] \right\}.
\]
Then, by the monotonicity of the first eigenvalues with respect to the domain, by inclusion, we have that the first eigenvalue in $\Omega_{n,2}$ or $\Omega_{n,3}$ are also negative for every integer $1 \leq n < m$, $\Omega_{n,i}$ defined as before, replacing $m$ by $n$, for $i=2,3$. The corresponding eigenfunctions, say $\varphi_{n,i}$ extended by oddness with respect to the anticlockwise border of $\Omega_{n,i}$ and periodically, with angular period $\frac{2\pi}{n}$, give rise to other two eigenfunctions for \eqref{fiftheigenvalue}, for every $n \in \{1, \ldots, m\}$. By construction, their symmetry or antisymmetry, all these pairs of eigenfunctions are mutually orthogonal with respect to both the bilinear forms  
\begin{equation}\label{bilinearforms}
\begin{array}{l}
(U,W) \mapsto \int_{\Omega} |x|^{\alpha} U W dx, \quad \text{and}\\ \\
(U,W) \mapsto Q_u(U,W) =\int_{\Omega} \left[ \nabla U \nabla W - |x|^{\alpha} f'(u) UW \right]dx,
\end{array}
\end{equation}
so that we get $2m$ negative eigenvalues for \eqref{fiftheigenvalue} corresponding to nonradial directions. Counting also the first radial eigenvalue, which is negative, and from the second up to the $n(u)$-th radial eigenvalue which are also negative if \eqref{H:superlinear} holds, we get the assertion, since $\alpha = 2 (m-1)$.
\end{proof}

\subsection*{Acknowledgements} The authors thank D. Bonheure and H. Tavares for some interesting discussions on the subject of this paper. This work was done while the first author was visiting the Dipartimento di Matematica of the Università di Roma {\it{Sapienza}}, whose hospitality he gratefully acknowledges.


\begin{thebibliography}{10}

\bibitem{AftalionPacella}
Amandine Aftalion and Filomena Pacella.
\newblock Qualitative properties of nodal solutions of semilinear elliptic
  equations in radially symmetric domains.
\newblock {\em C. R. Math. Acad. Sci. Paris}, 339(5):339--344, 2004.

\bibitem{BartschChangWang}
T.~Bartsch, K.-C. Chang, and Z.-Q. Wang.
\newblock On the {M}orse indices of sign changing solutions of nonlinear
  elliptic problems.
\newblock {\em Math. Z.}, 233(4):655--677, 2000.

\bibitem{BartschWeth}
Thomas Bartsch and Tobias Weth.
\newblock A note on additional properties of sign changing solutions to
  superlinear elliptic equations.
\newblock {\em Topol. Methods Nonlinear Anal.}, 22(1):1--14, 2003.

\bibitem{BartschWethWillem}
Thomas Bartsch, Tobias Weth, and Michel Willem.
\newblock Partial symmetry of least energy nodal solutions to some variational
  problems.
\newblock {\em J. Anal. Math.}, 96:1--18, 2005.

\bibitem{BonheuredosSantosRamosTavares}
Denis Bonheure, Ederson~Moreira dos Santos, Miguel Ramos, and Hugo Tavares.
\newblock Existence and symmetry of least energy nodal solutions for
  {H}amiltonian elliptic systems.
\newblock {\em arXiv:1409.5693}, 2014.

\bibitem{CastroCossioNeuberger}
Alfonso Castro, Jorge Cossio, and John~M. Neuberger.
\newblock A sign-changing solution for a superlinear {D}irichlet problem.
\newblock {\em Rocky Mountain J. Math.}, 27(4):1041--1053, 1997.

\bibitem{ClementdeFigueiredoMitidieri}
Philippe Cl{{\'e}}ment, Djairo~Guedes de~Figueiredo, and Enzo Mitidieri.
\newblock Quasilinear elliptic equations with critical exponents.
\newblock {\em Topol. Methods Nonlinear Anal.}, 7(1):133--170, 1996.

\bibitem{CowanGhoussoub}
Craig Cowan and Nassif Ghoussoub.
\newblock Estimates on pull-in distances in microelectromechanical systems
  models and other nonlinear eigenvalue problems.
\newblock {\em SIAM J. Math. Anal.}, 42(5):1949--1966, 2010.

\bibitem{deFigueiredoLionsNussbaum}
D.~G. de~Figueiredo, P.-L. Lions, and R.~D. Nussbaum.
\newblock A priori estimates and existence of positive solutions of semilinear
  elliptic equations.
\newblock {\em J. Math. Pures Appl. (9)}, 61(1):41--63, 1982.

\bibitem{GladialiGrossiNeves}
Francesca Gladiali, Massimo Grossi, and S{{\'e}}rgio L.~N. Neves.
\newblock Nonradial solutions for the {H}{\'e}non equation in {$\mathbb{R}^N$}.
\newblock {\em Adv. Math.}, 249:1--36, 2013.

\bibitem{Henon}
Michel H{\'e}non.
\newblock Numerical experiments on the stability of spherical stellar systems.
\newblock {\em Astronomy and astrophysics}, 24:229--238, 1973.

\bibitem{Kajikiya1990}
Ryuji Kajikiya.
\newblock Sobolev norms of radially symmetric oscillatory solutions for
  superlinear elliptic equations.
\newblock {\em Hiroshima Math. J.}, 20(2):259--276, 1990.

\bibitem{EdersonPacella}
Ederson Moreira~dos Santos and Filomena Pacella.
\newblock H\'enon type equations and concentration on spheres.
\newblock {\em To appear in Indiana Univ. Math. J. (arXiv:1407.6581)}.

\bibitem{Ni}
Wei~Ming Ni.
\newblock A nonlinear {D}irichlet problem on the unit ball and its
  applications.
\newblock {\em Indiana Univ. Math. J.}, 31(6):801--807, 1982.

\bibitem{Ni1983}
Wei-Ming Ni.
\newblock Uniqueness of solutions of nonlinear {D}irichlet problems.
\newblock {\em J. Differential Equations}, 50(2):289--304, 1983.

\bibitem{PacellaSalazar}
Filomena Pacella and Dora Salazar.
\newblock Asymptotic behaviour of sign changing radial solutions of {L}ane
  {E}mden problems in the annulus.
\newblock {\em Discrete Contin. Dyn. Syst. Ser. S}, 7(4):793--805, 2014.

\bibitem{PacellaSrikanth}
Filomena Pacella and P.~N. Srikanth.
\newblock A reduction method for semilinear elliptic equations and solutions
  concentrating on spheres.
\newblock {\em J. Funct. Anal.}, 266(11):6456--6472, 2014.

\bibitem{PacellaWeth}
Filomena Pacella and Tobias Weth.
\newblock Symmetry of solutions to semilinear elliptic equations via {M}orse
  index.
\newblock {\em Proc. Amer. Math. Soc.}, 135(6):1753--1762 (electronic), 2007.

\end{thebibliography}
\end{document}